\newcommand{\denabs}{b_{\mathrm{abs}}}
\newcommand{\denKWabs}{a_{\mathrm{abs}}}
\newcommand{\denDeltaabs}{\Delta_{\mathrm{abs}}}
\newcommand{\identitymatrix}[1]{\mathrm{Id}_{#1\times #1}}
\newcommand{\exponentofinvariant}{\ell}
\newtheorem{lemma}{Lemma}[section]
\newtheorem{theorem}[lemma]{Theorem}
\newtheorem{proposition}[lemma]{Proposition}
\newtheorem{cor}[lemma]{Corollary}
\newtheorem{conjecture}[lemma]{Conjecture}
\newtheorem{claim*}{Claim}
\theoremstyle{definition}
\newtheorem{remark}[lemma]{Remark}
\newtheorem{example}[lemma]{Example}
\def\O{\mathcal{O}}
\newcommand{\PP}{{\mathbb P}}
\newcommand{\C}{{\mathbb C}}
\newcommand{\Q}{{\mathbb Q}}
\newcommand{\R}{{\mathbb R}}
\newcommand{\Z}{{\mathbb Z}}
\newcommand{\Qbar}{{\overline{\Q}}}
\newcommand{\Lbar}{{\overline{L}}}
\newcommand{\Cbar}{{\overline{C}}}
\newcommand{\Jbar}{{\overline{J}}}
\newcommand{\Jcal}{{\mathcal{J}}}
\newcommand{\calC}{{\mathcal C}}
\newcommand{\OO}{{\mathcal O}}
\newcommand{\frakp}{{\mathfrak p}}
\newcommand{\den}{\text{den}}
\DeclareMathOperator{\tr}{tr}
\DeclareMathOperator{\End}{End}
\DeclareMathOperator{\Hom}{Hom}
\DeclareMathOperator{\Aut}{Aut}
\DeclareMathOperator{\Jac}{Jac}
\DeclareMathOperator{\Mat}{M}
\newcommand{\homEA}[1]{\fbox{\raisebox{0ex}[3ex][2ex]{$#1$}}}
\newcommand{\homAE}[1]{\framebox[3em][c]{$#1$}}
\newcommand{\homAA}[1]{\framebox[3em][c]{\raisebox{0ex}[3ex][2ex]{$#1$}}}
\newcommand{\EndEA}[4]{{\setlength\arraycolsep{0pt} \left(\begin{array}{cc} #1 & \homAE{#2} \\ \homEA{#3} & \homAA{#4}\end{array}\right)}}
\newcommand{\homEEEtoEA}[6]{{\setlength\arraycolsep{0pt} \left(\begin{array}{ccc}
			#1 & #2 & #3 \\
			\homEA{#4} &\homEA{#5} &\homEA{#6} \end{array}\right)}}
\newcommand{\homEAtoEEE}[6]{{\setlength\arraycolsep{0pt} 
\left(\begin{array}{ccc}
			#1 & \homAE{#2} \\
			#3 & \homAE{#4} \\
			#5 & \homAE{#6} \end{array}\right)}}
\newcommand{\xyzw}{\EndEA{x}{y}{z}{w}}
\numberwithin{equation}{section}
\numberwithin{table}{section}
\title[Primes dividing invariants of CM Picard curves]{Primes dividing invariants of CM Picard curves}
\author{P{\i}nar K{\i}l{\i}\c{c}er, 
 Elisa Lorenzo Garc\'ia,  Marco Streng}
\thanks{The work of Lorenzo Garc\'ia was partially supported by a project PEPS-Jeunes Chercheur-e-s - 2017.
The work of K{\i}l{\i}\c{c}er was partially supported by DFG priority
project SPP 1489}
\let\@wraptoccontribs\wraptoccontribs
\thanks{}
\subjclass[2010]{Primary: 14H45 14K22. Secondary: 11H06 14G50 14H40 14Q05}
\keywords{Picard curves; curves invariants; complex multiplciation; Hilbert class polynomials; bad reduction;}
\newcommand{\Addresses}{{
		\bigskip
		\footnotesize
		
		P{\i}nar K{\i}l{\i}\c{c}er, \textsc{ 
			Johann Bernoulli Instituut voor Wiskunde en Informatica, 
			Rijksuniversiteit Groningen,
			Nijenborgh 9,
            9747 AG Groningen, Nederland}\par\nopagebreak
		\textit{E-mail address}, P{\i}nar K{\i}l{\i}\c{c}er: \texttt{p.kilicer@rug.nl}
		
		\medskip
		
		Elisa Lorenzo Garc\'ia, \textsc{IRMAR, Université de Rennes 1,
			Campus de Beaulieu, 
			35042 Rennes Cedex, France}\par\nopagebreak
		\textit{E-mail address}, Elisa Lorenzo Garc\'ia: \texttt{elisa.lorenzogarcia@univ-rennes1.fr}

		\medskip
		
		Marco Streng, \textsc{Mathematisch Instituut,
			Universiteit Leiden,
			P.O. box 9512,
			2300 RA Leiden,
			The Netherlands}\par\nopagebreak \textit{E-mail address}, Marco Streng: \texttt{streng@math.leidenuniv.nl}
		
	}}
\begin{document}

	\begin{abstract}
		We give a bound on the primes dividing the denominators
		of invariants of Picard curves of genus $3$ with complex multiplication.

		Unlike earlier bounds in genus $2$ and~$3$,
		our bound is based not on bad reduction of curves,
		but on a very explicit type of good reduction.
		This approach simultaneously yields a simplification of the proof,
		and much sharper bounds.
				In fact, 
		unlike all previous bounds for genus $3$, our bound is sharp enough for
		use in explicit constructions of Picard curves.
	\end{abstract}
	\maketitle
	

\section{Introduction}

The \emph{Hilbert class polynomial} of an imaginary quadratic field $K$
is the polynomial
whose roots are the $j$-invariants of the elliptic curves $E$
with endomorphism ring isomorphic to the maximal order~$\mathcal{O}_K$.
Its roots generate the Hilbert class field
and
are used for constructing elliptic curves with prescribed
order, which are used in cryptography.

The Hilbert class polynomial has integer
coefficients,
so in order to compute it, it suffices to numerically
approximate its coefficients up to the decimal point.
These ideas can be generalized to curves of genus $g$
as long as their Jacobians have \emph{complex multiplication (CM)}:
The imaginary quadratic field needs to be replaced by
a \emph{CM field} $K$ of degree $2g$, that is, a totally imaginary quadratic 
extension of a totally real number field of degree~$g$,
and we consider curves whose Jacobian
has endomorphism ring isomorphic to~$\mathcal{O}_K$.

Using suitable invariants for 
curves of genus~$g$,
this gives
rise to \emph{class polynomials},
whose coefficients are rational, but not necessarily
integral.
Computational methods for numerically approximating
these polynomials are known for $g\leq 3$
\cites{
	KLLRSS, 			
	KoikeWeng, 			
	LarioSomoza,		
	Wenghypellg3,		
	BILV,				
	BBEL,				
	Spallek,			
	vanWamelen,			
	EisentragerLauter,  
	AroraEisentraeger,  
	GHKRW				
}.

For the efficiency of the methods, as well as a proof of their output, and
a theoretical understanding of the types of $S$-integers created
with such constructions, we need to know the denominators
of the coefficients of these polynomials.

For the case $g=2$, these denominators are now understood,
thanks to the work of Bruinier, Goren, Lauter, Viray and Yang
\cite{GorenLauter, GorenLauter2, LauterViray, BruinierYang}.
The denominators in that case are effectively
computable products of powers of small
primes,
which have been used for computing and proving correctness
of CM curves of genus two~\cite{BouyerStreng}.

In general for $g=3$, it is expected (see \cite[Section~4]{KLLRSS})
that such a result does not hold. However, for the specific
case of hyperelliptic curves ($y^2 = x^8+\cdots$),
a bound on the primes dividing the denominators of invariants is given
by K{\i}l{\i}{\c{c}}er, Lauter, Lorenzo Garc\'ia, Newton, Ozman and
Streng~\cite{KLLNOS}. The proof of~\cite{KLLNOS}
also works for Picard curves ($y^3 = x^4 + \cdots$)
except for a technical conjecture
in~\cite{KLLNOS}, which is work in progress of Reynald Lercier, Qing Liu,
Elisa Lorenzo Garc\'ia and Christophe Ritzenthaler.
The bound in~\cite{KLLNOS} is unfortunately too large to be practical.

All bounds mentioned so far are based on the fact that primes
dividing the denominators of invariants are primes of bad reduction.
The difficulty that makes the bound of \cite{KLLNOS} so large is
that just knowing bad reduction of the curve does not give much information
on the endomorphism structure of the reduction of the Jacobian.

In this paper we propose an alternative set of invariants for Picard curves.
With that set of invariants, the primes dividing the denominators
are primes of a certain very explicit type of reduction
(Lemma~\ref{lem:cases}).

As in \cite{GorenLauter} and \cite{KLLNOS}, we use the embedding
of $\mathcal{O}_K$ into the endomorphism ring of the reduction of the
Jacobian ($\Jac(C)$ modulo $\mathfrak{p}$) 
to get a contradiction for large~$p$.
We get matrices whose entries are quaternion algebra elements.
In loc.~cit.~these quaternion algebra elements are forced to commute
when $p$ gets very large.
Our explicit type of reduction allows us to prove commutativity directly,
so we do not need to assume that $p$ is very large in our proofs.

This drastically reduces the final bound on~$p$
(Theorem~\ref{thm:main}).
In fact, we get a formula for a small set $S$ of small primes
such that the denominators are $S$-units (see Theorem~\ref{thm:main2}).
We also conjecture a bound on the exponents, yielding
a formula for a denominator that is small enough for practical
computation (see Conjecture~\ref{conj:valuations}).
We made a SageMath implementation of our bounds available
online at~\cite{ourcode} and give
numerical examples in Section~\ref{sec:examples}.

\subsubsection*{Acknowledgements}

The authors would like to thank Irene Bouw, Peter Bruin,
Christophe Ritzenthaler, Matthieu Romagny and Anna Somoza
for helpful discussions, 
the anonymous referees for helpful criticism of the exposition in an earlier version,
and Bas Edixhoven for providing the proof
in Appendix~\ref{appendix}.
Most of K{\i}l{\i}\c{c}er's work was done during
her
stay in Universiteit Leiden and Carl von Ossietzky Universit\"at Oldenburg.

	\section{Picard curve, invariants, and statement of the main theorem}  
	
	In this section we introduce Picard curves, a new set of invariants of Picard
	curves,
	and the basic concepts of complex multiplication we will use.
	We also state our main theorem bounding the primes dividing
	the denominators of the invariants.
	
	\subsection{Picard curves}\label{ssec:picard}
	
	Let $L$ be a field of characteristic not~$2$ or~$3$.
	A \emph{Picard curve} of genus~$3$ over~$L$
	is a smooth, projective, plane 
	curve given by an equation of the form 
	\begin{equation}\label{eq:picard}
	C:\,y^3=x^4+ax^2+bx+c,
	\end{equation}
    where $a$, $b$, $c\in L$.
	Suppose that $L$ contains a primitive $3$rd root of unity~$\zeta_3$.
	The automorphism group of $C$ then contains
	$\rho:(x,y)\mapsto(x,\zeta_3y)$ of order $3$.
	The push-forward $\rho_*$ of this automorphism
	is an automorphism of the Jacobian
	$J = \Jac(C)$, and hence is a primitive third root
	of unity in the endomorphism ring.
	By abuse of notation, we denote $\rho_*\in \End(J)$ also by~$\zeta_3$.
	
	Two Picard curves $C : y^3 = x^4 + ax^2+bx+c$ and
	$C' : y^3 = x^4 + a'x^2+b'x+c'$ over a field
	$L$ of characteristic $\nmid 6$
	are isomorphic over $L$ if and only
	if there exists a $\lambda\in L^*$ with
	\begin{equation}\label{eq:isomorphic}
	\lambda^6 a = a',\qquad \lambda^9 b = b',\qquad\mbox{and}\qquad \lambda^{12} c = c'.
	\end{equation}
    
    \subsection{Complex multiplication}\label{ssec:CM}
    
    Let $C$ be a Picard curve.
	We say that $C$ or its Jacobian $J=\Jac(C)$
	has \emph{complex multiplication (CM)}
    if there is a number field $K$ of degree $6$
    and an embedding $\iota:K \rightarrow \End(J)\otimes \Q$.
    From now on, assume that this is the case,
    and let $K$ and $\iota$ be as above.
    We say that $C$ and $J$ have
    \emph{CM by the order $\mathcal{O} = \iota^{-1}(\End(J))\subset K$}.
    
	We say that $J$ has \emph{primitive CM}
	if the embedding $\iota$ gives an isomorphism
	from $K$ to the endomorphism algebra $\End(J_{\overline{L}})\otimes \Q$
	of $J$ over the algebraic closure of~$L$.
	If $\mathrm{char}(L)=0$ and $J$ has CM, then after extending $L$ the
	induced representation on the tangent space is isomorphic to a
	product of embeddings of $K$ into $L$.
	The set $\Phi$ of such embeddings is called the \emph{CM type} of the map $K\rightarrow \End(J)\otimes \Q$.
	It is known that if $J$ has primitive CM if and only
	if $K$ is a CM field and the CM type is \emph{primitive},
	that is, if and only if for every CM subfield $K_1\subsetneq K$,
	the restriction
	of $\Phi$ to $K_1$ is not a CM type
	(see \cite[1.3.5]{Lang83}.)		
	
	Note that if $J$ has primitive CM by $\mathcal{O}$, then 
	the endomorphism $\zeta_3$ of Section~\ref{ssec:picard}
	is (via~$\iota$) a primitive third root of unity in~$\mathcal{O}$.
	Conversely, if a smooth curve of genus~$3$
	has primitive CM by an order
	containing a primitive $3$rd root of unity,
	then the curve is a Picard curve
	(this is stated in the case of maximal orders
	as \cite[Lemma~1]{KoikeWeng},
	though the final sentence of the proof 
	is wrong
	and needs to be replaced by
	\cite[Lemma~7.3 and the paragraph above it]{Hol};
	the proof does not use that the order is maximal, only
	that the third root of unity is in the order).
	
	This allowed Koike and Weng~\cite{KoikeWeng} to
	construct genus-three Picard curves
	with CM by orders in fields of the form $F(\zeta_3)$ for a totally 
    real cubic field~$F$ and a third root of unity~$\zeta_3$,
	just as one can construct elliptic curves with CM by orders
	in imaginary quadratic fields.

	\subsection{Invariants of Picard curves}\label{sec:invariants}
	
	A \emph{homogeneous Picard curve invariant}
	is a weighted homogeneous polynomial $I\in \Z[a,b,c]$
	where $a$, $b$, $c$ are formal polynomial variables of weights $2$, $3$, $4$, respectively.
	For a model as in \eqref{eq:picard} of a Picard curve $C$
	over a field~$L$, we get the value $I(C)\in L$
	by evaluating $I$ in the coefficients of~\eqref{eq:picard}.
	
	For example, we have the
	invariant 
	\begin{equation}\label{eq: disc. inv.}
	\Delta = -4 a^{3} b^{2} + 16 a^{4} c - 27 b^{4} + 144 a b^{2} c - 128 a^{2} c^{2} + 256 c^{3}
	\end{equation}
	of weight~$12$, and $\Delta(C)$ is non-zero for all Picard curves $C$
	as it is the discriminant of the right hand side of~\eqref{eq:picard}.
	An \emph{absolute Picard curve invariant} 
	is a quotient $j = u/b^\exponentofinvariant$ where $u\in\Z[a,b,c]$ has weight~$3\exponentofinvariant$. 
	For example, the three rational functions $j_1 = a^3/b^2$, $j_2 = ac/b^2$,
	and $j_3 = c^3/b^4 = j_1^{-1}j_2^3$
	are absolute Picard curve invariants.
	
	All Picard curves $C$ with $a(C)\not=0$ can 
	be reconstructed up to twists from the values $j_1(C)$
	and $j_2(C)$ of the
	invariants $j_1$ and $j_2$
	as follows.
	Given a Picard curve $C$ over a field~$L$, the curve
	$D: y^3 = x^4 + j_1(C)x^2 + j_1(C)x + j_1(C)j_2(C)$
	is isomorphic to $C$ over the algebraic closure~$\overline{L}$.
	
	Moreover, the three invariants $j_1$, $j_2$, and $j_3$
	generate the ring of all
	absolute Picard curve invariants.
	Indeed, an absolute Picard curve invariant
	is a linear combination of monomials $a^Ac^C/b^B$ with $2A+4C=3B$,
	and each such monomial is a non-negative power of~$j_2$ times a monomial
	with $A=0$ or $C=0$, which in turn is a power of $j_3$ or~$j_1$.

	Instead of the quotients $b^2/a^3=1/j_1$ and $c/a^2=j_2/j_1$
	used by Koike-Weng~\cite{KoikeWeng}, 
	we consider 
	$j_1$, $j_2$, $j_3$ because 
	the primes dividing the denominators of our invariants 
	have nice properties 
	that we can use to find good bounds for them, see Lemma~\ref{lem:cases}
	and Proposition~\ref{prop:decomp}.
	
	\begin{remark}\label{rmk:nonpole}
		If $C$ is a Picard curve of genus $3$ over a number field $L$ with primitive CM,
		and $j=u/b^l$ is an absolute Picard curve invariant,
		then
		$b(C)\neq0$ and $j(C)\in L$.
		This is because if $b(C)=0$, then the curve $C$ admits a non-constant morphism to an
		elliptic curve (formula in \eqref{eq:phi}) and hence its Jacobian is not simple,
		which gives a contradiction with having a primitive CM-type.
	\end{remark}
	
	In particular, for every sextic CM order $\mathcal{O}$ with $\zeta_3\in \mathcal{O}$,
	the class polynomials
	\begin{align}
	 H_{\mathcal{O}, 1}(X) &= \prod_C (X-j_1(C)),\label{eq:classpol1}\\
	 \widehat{H}_{\mathcal{O}, 2}(X) &= \sum_{C} j_2(C)\prod_{D\not\cong C} (X-j_1(D)),\label{eq:classpol2}
	\end{align}
	(with sums and products ranging over the isomorphism classes
	of curves over $\C$ with primitive CM by~$\mathcal{O}$; see~\cite{GHKRW})
	are well defined.

\subsection{Statement of the main result and overview of the proof}

	A weak version of our main theorem is as follows.
	\begin{theorem}\label{thm:main}
		Let $C$ be a Picard curve of genus $3$ 
		over a number field $L$ with $\End(\Jac(C)_{\bar{L}})$
		isomorphic to an order $\O$ of a number field $K$ of degree~$6$. Let $K_+$
		be the real cubic subfield of $K$.
		Let $\mu\in \Z+2\O$ be 
		such that 
		$K_+ = \Q(\mu)$.
		
		Let $j = u/b^\exponentofinvariant$ be an absolute Picard curve invariant.
		Let $\mathfrak{p}$ be a prime of $L$ lying over a rational prime~$p$.
		If $\mathrm{ord}_{\mathfrak{p}}(j(C)) < 0$, then
		$$p \leq \tr_{K_+/\Q}(\mu^2)^3\qquad\mbox{and}\qquad
		p\leq \left(1+\frac{16}{\pi}|\Delta(\O_+)|^{1/2}\right)^{3}< 196|\Delta(\O_+)|^{3/2}.$$
	\end{theorem}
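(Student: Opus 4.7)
The plan is to combine Lemma~\ref{lem:cases}, which identifies the very explicit type of reduction at primes in the denominator of the invariants, with a quaternion-matrix embedding of the CM order into the endomorphism ring of the reduced Jacobian. This follows the overall strategy of \cite{GorenLauter} and \cite{KLLNOS}, but with the explicitness of the reduction type replacing the usual ``$p$ sufficiently large'' hypothesis.

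First I would assume $\ord_\mathfrak{p}(j(C)) < 0$. After rescaling to a Picard model minimal at $\mathfrak{p}$ via the weighted action in \eqref{eq:isomorphic}, this forces $b(C)$ to vanish in the reduction (relative to $a$ or $c$), while Remark~\ref{rmk:nonpole} guarantees $b(C)\neq 0$ generically. Lemma~\ref{lem:cases} then places us in one of finitely many explicit degenerations, and Proposition~\ref{prop:decomp} supplies a decomposition of $\Jac(C)_\mathfrak{p}$ (up to isogeny over $\overline{\F}_p$) containing a supersingular elliptic factor whose endomorphism algebra is $\BBp$. Reducing the CM embedding $\O \hookrightarrow \End(\Jac(C))$ modulo $\mathfrak{p}$ and restricting to this decomposition, each element of $\O$ becomes a matrix whose entries lie in $\Hom$-modules between the factors, all of which come from orders in $\BBp$.

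The central step is to show that these quaternionic matrix entries commute. In \cite{KLLNOS} commutativity was achieved only for very large $p$; here the rigidity provided by Lemma~\ref{lem:cases} gives it directly, so that the subring generated by $\zeta_3$ and by a fixed generator $\mu \in \Z + 2\O$ of $K_+$ reduces into a commutative order, which necessarily embeds into an imaginary quadratic subfield of $\BBp$. Comparing the characteristic polynomial of $\mu$ over $\Q$ with its reduction via the Weil-style inequality $\sigma(\mu)^2 \leq \tr_{K_+/\Q}(\mu^2)$, and using that $\BBp$ is ramified at $p$, one extracts $p \leq \tr_{K_+/\Q}(\mu^2)^3$, with the cube reflecting $[K_+:\Q]=3$. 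The hypothesis $\mu \in \Z + 2\O$ is a $2$-integrality condition ensuring that the matrix entries one computes really come from endomorphisms (rather than quasi-endomorphisms), making the commutativity analysis clean.

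The second inequality is Minkowski-theoretic: applied to the rank-$3$ lattice $\Z + 2\O_+ \subset K_+ \otimes \R$ equipped with the positive-definite trace form $(x,y) \mapsto \tr_{K_+/\Q}(xy)$, Minkowski's theorem produces a short vector $\mu$ generating $K_+$ with $\tr_{K_+/\Q}(\mu^2) \leq 1 + \frac{16}{\pi} |\Delta(\O_+)|^{1/2}$; the constant $16/\pi$ is the standard Minkowski bound for the trace form in rank $3$, adjusted for the index of $\Z + 2\O_+$ in $\O_+$ and for excluding $\mu \in \Z$. The principal obstacle is the commutativity step: turning the rigidity of the reduction into an algebraic statement that removes any asymptotic hypothesis on $p$ is where the deliberate choice of invariants $j_1, j_2, j_3$ over the Koike-Weng invariants (Section~\ref{sec:invariants}) is essential, since it is their denominators that pin down the rigid degeneration of Lemma~\ref{lem:cases}.
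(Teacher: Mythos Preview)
Your outline correctly identifies Lemma~\ref{lem:cases} and Proposition~\ref{prop:decomp} as the starting point, and you are right that the explicit $\zeta_3$ is what replaces the ``$p$ large'' hypothesis in the commutativity step. But the mechanism you propose for extracting the bound on $p$ is not the one that actually works, and two essential ingredients are missing.

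First, you assert that the elliptic factor $A_1$ is supersingular with endomorphism algebra $\BBp$, and that the bound on $p$ comes from ramification of $\BBp$. Neither is true here. In the main case~\textit{(3)} of Lemma~\ref{lem:cases}, the factor $A_1$ is the elliptic curve $v^2+av=u^3-1$ with CM by $\Z[\zeta_3]$, which is \emph{ordinary} whenever $p\equiv 1\pmod 3$; its endomorphism algebra is then $\Q(\zeta_3)$, not a quaternion algebra. The paper never invokes supersingularity or quaternion ramification. What the commutativity argument (Lemma~\ref{lem:Q(r)}) actually yields is stronger than ``entries lie in an imaginary quadratic field'': it shows that $\iota(2\zeta_3+1)$ is a \emph{scalar} matrix $r_1\identitymatrix{3}$.

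Second, this scalar-matrix conclusion is precisely what is used to bound $p$, via two steps you do not mention. The first (Section~\ref{sec:tangent}, Proposition~\ref{prop:pdividesn}) is a tangent-space argument: primitivity of the CM type forces the action of $2\zeta_3+1$ on the tangent space of $J$ to have two distinct eigenvalues, and if $p\nmid 6n$ these survive to the reduction and contradict $\iota(2\zeta_3+1)=r_1\identitymatrix{3}$. Hence $p\mid n$, where $n$ is any integer killing $\ker(F_1)$. The second step (Section~\ref{sec:endofproof}) is to choose $n$ well and bound it: one pulls back the principal polarization through $F$ to get a positive-definite matrix $\lambda$ on $A_1^3$, sets $n=\alpha\gamma-\beta\beta^\vee$ from its entries, and uses the relation $\iota(\mu)^\vee\lambda=\lambda\iota(\mu)$ together with the trace identity for $\iota(\mu^2)$ to obtain $n\leq t_2^3$ with $t_2=\tr_{K_+/\Q}(\mu^2)$. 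Your ``Weil-style inequality'' and characteristic-polynomial comparison do not substitute for this; without the tangent-space contradiction there is no link between $p$ and any quantity you can bound, and without the polarization there is no bound on that quantity.

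Your Minkowski sketch for the second inequality is in the right spirit; the paper uses the convex body $\{|x_1|<1,\ x_2^2+x_3^2<R^2\}$ in $\R^3$ against the lattice $\Z+2\O_+$ of covolume $4|\Delta(\O_+)|^{1/2}$, which is where the constant $16/\pi$ comes from.
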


   In Sections~\ref{sec:reduction}--\ref{sec:geometryofnumbers} we prove
   Theorem~\ref{thm:main}.
   We give a stronger version in Section~\ref{sec:setofprimes}.
	The stronger version gives an algorithm for computing a set of primes, instead of just a bound
	on the primes.
	In Section~\ref{sec:conjecture} we also give a conjecture about the powers to which
	such primes appear in the denominators of the invariants.
	A SageMath implementation is available online at
	\cite{ourcode}.
	In Section~\ref{sec:examples} we give examples
	that show that the resulting denominator bounds are small enough
    for practical class polynomial computations.
	
   The first step of the proof of Theorem~\ref{thm:main} is the explicit type of
   reduction that is implied by the appearance of a prime $\mathfrak{p}$ in the invariant~$b$.
   This type of reduction is given in Lemma~\ref{lem:cases}.
   Proposition~\ref{prop:decomp} then shows how this type of reduction
   makes the reduction of the Jacobian decompose into a product
   of an elliptic curve $A_1$ and a principally polarized abelian surface~$A_2$.
   The rest of Section~\ref{sec:reduction} is the proof of Proposition~\ref{prop:decomp}.
   
   Once we know this decomposition of the
   reduction $\overline{J}$ of the Jacobian $J=\Jac(C)$,
   the endomorphisms of $J$ give rise to matrices
   consisting of homomorphisms between the components $A_1$ and $A_2$ of~$\overline{J}$.
   This will make $A_2$ decompose further and give our endomorphisms as matrices
   over the endomorphism ring $\mathcal{R}$ of~$A_1$ (see Section~\ref{sec:decomp}).
   
   An important part of the earlier proofs in genus $2$ and~$3$
   (\cite{GorenLauter, KLLNOS})
   is to force these matrices
   to have entries in a \emph{field} instead of in the quaternion ring~$\mathcal{R}$.
   This was always done by an argument from \cite{GorenLauter}, which uses the fact that
   elements of small norm of quaternion algebras of large discriminant commute.
   In order to be able to use this fact, the prime $p$ needs to be very large, which
   is why bounds based on that type of argument typically are very large
   (an exception is \cite{LauterViray}, which is more
   complicated and has not been generalized
   to genus three yet).
   In Section~\ref{sec:commute} we use the explicit endomorphism $\zeta_3=\rho_*$ and the fact that
   this induces an endomorphism of $A_1$ and $A_2$ to get commutativity. This greatly
   simplifies our proof and drastically reduces the resulting bounds.
   
   In Section~\ref{sec:tangent} we use primitivity of the CM type, via the tangent space,
   to show that primes with our type of reduction divide the exponent $n$ of the
   kernel of the isogeny
   $\overline{J}\rightarrow A_1^3$.
   This argument is exactly the same as in \cite{KLLNOS}, hence that section
   is very short and is basically a reference to \cite{KLLNOS}.
   In genus~$2$ \cite{GorenLauter}
   such an argument is not needed, see \cite[Section~5]{KLLNOS} for details.
   
   In Section~\ref{sec:endofproof} we bound the exponent $n$ mentioned
   in the previous paragraph.
   For this, we need to have a well-chosen isogeny in Section~\ref{sec:decomp}
   to base the exponent~$n$ on, and we need to look at what happens with the polarizations
   (which give rise to positive definite matrices) under our isogenies.
   This completes the proof of the first inequality of Theorem~\ref{thm:main}.
   
   Section~\ref{sec:geometryofnumbers} uses geometry of numbers
   to derive the second inequality from the first.

	\section{Reduction of Picard curves}\label{sec:reduction}

    In this section we give the explicit type of reduction that follows from a prime dividing the
    invariant $b$ of a Picard curve.
    Lemma~\ref{lem:cases} gives the three possible reduction types of the curve,
    and Proposition~\ref{prop:decomp} shows what this implies for the decomposition
    of the reduction of the Jacobian.
	
	\begin{lemma}\label{lem:cases}
	Let $C$ be a Picard curve of genus $3$
	over a number field $L$ and let $\mathfrak{p}\nmid 6$ be a prime of $\mathcal{O}_L$.
	Let $j = u/b^\exponentofinvariant$ be an absolute Picard curve invariant.
	
	If $\mathrm{ord}_{\mathfrak{p}}(j(C)) < 0$,
	then after replacing~$L$ with an extension and~$C$ with an isomorphic curve, 
	we are in one of the following cases:
	\begin{enumerate}
		\item $C : y^3 = x^4 + ax^2 + bx + 1$ with $a$, $b\in \mathcal{O}_L$
		such that
		$b\equiv 0$ and $a\equiv \pm 2$ modulo $\mathfrak{p}$.
		The reduction of this equation (from $\mathcal{O}_L$ to $\mathcal{O}_L/\mathfrak{p}$)
		is the singular curve $y^3 = (x^2\pm 1)^2$ of geometric
		genus~$1$;
		\item $C : y^3 = x^4 + x^2 + bx + c$ with $b$, $c\in\mathfrak{p}$.
		The reduction of this equation is the singular curve $y^3 = (x^2+1)x^2$ of geometric genus $2$;
		\item $C : y^3 = x^4 + ax^2 + bx + 1$ with $a$, $b\in\mathcal{O}_L$
		such that $b\equiv 0$ and $a\not\equiv \pm 2$ modulo $\mathfrak{p}$.
		The reduction of this equation is the smooth
		curve $y^3 = x^4 + \overline{a} x^2 + 1$ of genus $3$,
		where $\overline{a} = (a\ \mathrm{mod}\ \mathfrak{p})$.
	\end{enumerate}
	\end{lemma}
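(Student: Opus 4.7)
The plan is to exploit the rescaling freedom from \eqref{eq:isomorphic}: under $(a,b,c) \mapsto (\lambda^{6} a, \lambda^{9} b, \lambda^{12} c)$, the valuations shift as $(v(a), v(b), v(c)) \mapsto (v(a)+6v(\lambda), v(b)+9v(\lambda), v(c)+12v(\lambda))$, where $v = \ord_{\mathfrak{p}}$. After enlarging $L$ to allow rational values of $v(\lambda)$ (by ramifying $\mathfrak{p}$ with index dividing $\lcm(6,9,12)=36$), I can choose $\lambda$ so that $\min(v(a)/6, v(b)/9, v(c)/12) = 0$. I call such a model \emph{normalized}: then $a,b,c\in\mathcal{O}_{L,\mathfrak{p}}$ and at least one of them is a $\mathfrak{p}$-adic unit.

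The key step is to show $v(b(C)) > 0$ in any normalized model. Every monomial $a^A b^B c^C$ appearing in $u$ satisfies $2A+3B+4C=3\exponentofinvariant$, and $v(a^A b^B c^C)\geq 0$ since the coefficients are integral, so $v(u)\geq 0$. The hypothesis $v(j)=v(u)-\exponentofinvariant\cdot v(b)<0$ therefore forces $v(b)>0$, i.e.\ $b\equiv 0\pmod{\mathfrak{p}}$. Hence the $\mathfrak{p}$-adic unit in the normalized model must be $a$ or $c$. If $v(c)=0$, then after one further extension of $L$ I rescale by a $12$th root of $c$ to normalize $c=1$, obtaining Case A: $C: y^3 = x^4+ax^2+bx+1$ with $a,b\in\mathcal{O}_L$ and $b\equiv 0\pmod{\mathfrak{p}}$. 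Otherwise $v(c)>0$, in which case necessarily $v(a)=0$, and I rescale by a $6$th root of $a$ to set $a=1$, obtaining Case B: $C: y^3=x^4+x^2+bx+c$ with $b,c\in\mathfrak{p}$.

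It remains to compute the geometric genera of the three reductions. In Case A, the reduction $y^3 = x^4+\bar{a}x^2+1$ is singular iff the quadratic $t^2+\bar{a}t+1$ (with $t=x^2$) has vanishing discriminant $\bar{a}^2-4$, iff $\bar{a}=\pm 2$; in that case the right hand side factors as $(x^2\pm 1)^2$, and the birational substitution $z=y/(x^2\pm 1)$ yields $(x^2\pm 1)z^3=1$, a smooth plane cubic of genus~$1$ (verifiable by the Jacobian criterion, or by Riemann--Hurwitz on the resulting cyclic triple cover of $\P^1$ with ramification over $\pm i$ and $\infty$). When $\bar{a}\neq\pm 2$, the quartic is separable and the reduction is already a smooth Picard curve of genus~$3$. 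In Case B, the reduction $y^3=x^2(x^2+1)$ admits the birational normalization $x^2-xz^3+1=0$ via $y=xz$, a smooth $(2,3)$-bidegree curve on $\P^1\times\P^1$ of genus $(2-1)(3-1)=2$ (equivalently, a cyclic triple cover of $\P^1$ ramified over $\{0,\pm i,\infty\}$, giving genus~$2$ via Riemann--Hurwitz). The main obstacle is the invariant-theoretic passage to a normalized model together with the inference $v(b)>0$, which is what forces the reduction into one of the three explicit shapes; the remaining genus computations are standard.
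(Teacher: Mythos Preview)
Your proof is correct and follows essentially the same route as the paper's: normalize via the scaling action so that $\min\{v(a)/6,v(b)/9,v(c)/12\}=0$ (the paper writes this as $\min\{v(a)/2,v(b)/3,v(c)/4\}$), observe that the hypothesis $v(j)<0$ forces $v(b)>0$, and then split according to which of $a,c$ is a unit. One small slip: the equation $(x^2\pm 1)z^3=1$ is not a plane cubic as written (it has total degree~$5$ in $x,z$); what you presumably mean is that after setting $w=1/z$ it becomes $w^3=x^2\pm 1$, which \emph{is} a smooth plane cubic---and in any case your Riemann--Hurwitz check already gives $g=1$ correctly.
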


\newcommand{\case}[1]{case~\textit{(#1)}}
	\begin{proof}
	Let $m_0 = \mathrm{min}\{\frac{1}{2}v(a), \frac{1}{3} v(b), \frac{1}{4} v(c)\}$, where $v=\mathrm{ord}_{\mathfrak{p}}$ is the $\frakp$-adic valuation.

	By our assumption that $\mathfrak{p}$ divides the denominator of~$j(C)$,
	this minimum is not attained by $\frac{1}{3} v(b)$.
	
	If it is attained by $\frac{1}{4} v(c)$,
	then we scale the curve over $\overline{L}$
	as in \eqref{eq:isomorphic} so that $c=1$.
	As the minimum is not attained by $\frac{1}{3} v(b)$, we get
	that the reduction is $y^3 = x^4 + \overline{a} x^2 + 1$,
	where the right hand side has discriminant $16 (\overline{a}-2)^2(\overline{a}+2)^2$.
	In particular, if $\overline{a}\not=\pm 2$, then we are in \case{3}.
	
	If $\overline{a}= \pm 2$, then the reduction is
	$y^3 = (x^2\pm 1)^2$.
	Let $Y = (x^2\pm 1)/y$.
	Then we get $Y^3 = (x^2 \pm 1)$, that is,
	the curve $\overline{C}$ is birational to the elliptic curve $x^2 = Y^3 \mp 1$
	with $j$-invariant $0$.
	
The only remaining case is the case where the minimum is not attained by $\frac{1}{4} v(c)$.
As the minimum is not attained by $\frac{1}{3} v(b)$, we find that it is only
attained by $\frac{1}{2}v(a)$.
Now we scale the curve so that $a=1$.
We get that the reduction is $y^3 = x^4+x^2 = (x^2+1)x^2$.
Let $Y = y/x$.
Then we get $xY^3 = x^2+1$, which is
the hyperelliptic curve
$x^2 - Y^3 x = -1$ of genus two.
In fact, taking $X = 2x -  Y^3$, we get
the hyperelliptic curve $X^2 = Y^6-4$.
\end{proof}

	\begin{example} \label{example}
		Let $K= K_+(\zeta_3)$, where $K_+ = \Q[y]/ (y^3-y^2-4y-1) = \Q(\zeta_{13})_+$
		is totally real abelian of discriminant $13^2$ and conductor~$13$.
		Let $$ C:y^3 = x^4-2\cdot 7^2\cdot 13x^2+2^3\cdot 5\cdot 13 \cdot 47 x
		- 5^2\cdot 13^2 \cdot 31.$$
		The curve $C$ was computed by Koike and Weng~\cite[\S 6.1(3)]{KoikeWeng}, who
		conjecture that its Jacobian has CM by $\mathcal{O}_K$
		of primitive CM type.
		This curve and its reductions also appear in
		Bouw-Cooley-Lauter-Lorenzo-Manes-Newton-Ozman \cite[\S 5.2]{BCLLMNO}.
		
		We compute
		$$ j_1 = - \frac{7^6\cdot 13}{2^{3}\cdot 5^2\cdot 47^2},\quad
		   j_2 =   \frac{7^2\cdot 13\cdot 31}{2^5\cdot 47^2},\quad
		   j_3 = -\frac{5^2\cdot 13^2\cdot 31^3}{2^{12}\cdot 47^4}.
		   $$
		We find that the primes in the denominators of $j_1$, $j_2$, and $j_3$
        are $2$, $5$ and~$47$.
		Lemma~\ref{lem:cases} does not apply to the prime~$2$
		as it divides~$6$.
		The prime $5$ is of \case{2}.
		The prime $47$ is of \case{3} as follows:		
		Take an integer $r\equiv 11$ modulo $47$,
		let $\alpha = \sqrt{r}$ and $L = \Q(\alpha)$.
		Then $C$ is isomorphic over~$L$ to the curve given by
		$$ y^3 = x^4 -\alpha^6\cdot 2\cdot 7^2\cdot 13x^2+\alpha^9\cdot 2^3\cdot 5\cdot 13 \cdot 47 x
		- \alpha^{12}\cdot 5^2\cdot 13^2 \cdot 31,$$
		which modulo $47$ is
		$$ C : y^3 = x^4 + 19 x^2 + 1.$$
	\end{example}
	\begin{remark}\label{rem:reductiontypes}
		We know no examples of Picard curves with primitive CM
		by a sextic field that have a reduction as in \case{1}.
	\end{remark}
\begin{proposition}\label{prop:decomp}
	Let $C$ be a Picard curve of genus $3$ over a number field $L$
	containing a primitive third root of unity~$\zeta_3$.
	Let $\mathfrak{p}\nmid 6$ be a prime of~$L$.
	Suppose that $C$ is given (over $L$) by an equation as in one of the three
	cases in the conclusion of Lemma~\ref{lem:cases}.
	
	Let $J = \Jac(C)$ be the Jacobian of $C$, let $\Jcal$ be its
	N\'eron model over $\Z_{\mathfrak{p}}$ and let
	$\Jbar$ be its reduction modulo $\mathfrak{p}$.
	Assume that $J$ has CM or that we are in \case{3}. 
	Recall that $\zeta_3=\rho_*$ is a third root
    of unity in $\mathrm{End}(J)$; it induces
    endomorphisms of $\mathcal{J}$ and $\overline{J}$,
    which we also denote by~$\zeta_3$.
	
	Then there are abelian subvarieties $A_i$
	(with inclusion maps $I_i : A_i\hookrightarrow \Jbar$),
	surjective homomorphisms $s_i : \Jbar\rightarrow A_i$
	and
	endomorphisms $e_i\in\End(\Jbar)$ 
	for $i\in\{1,2\}$,	
	and an integer $d\in\{1,2\}$
	such that the following holds for all $i, j\in \{1,2\}$:
	\begin{itemize}
		\item[(a)]
		\begin{align*}
e_1+e_2 &= [d] & & \in \End(\Jbar),\\
e_i^2 &= [d] e_i & & \in\End(\Jbar),\\
e_1e_2 = e_2e_1 &= 0 & & \in\End(\Jbar),\\
e_i^{\!\!\textup{\dagger}} &= e_i & & \in \End(\Jbar),
\quad\mbox{where $\!{}^{\textup{\dagger}}$ denotes the Rosati involution,}\\
e_i &= I_i s_i & & \in\End(\Jbar),\\
s_i I_i &= [d] & & \in \End(A_i),\\
\mbox{if}\ i\not=j,\ \mbox{then}\quad s_i I_j &= 0 & & \in\Hom(A_j, A_i).
\end{align*}
Here and later, we write simply
$fg$ for $f\circ g$
in order to keep the notation clean and concise.
\item[(b)] The abelian variety $A_i$ has dimension
$i$ and we have a commutative diagram
\[
\xymatrix@R+1pc@C+1pc{\Jbar\ar[r]^{\left({s_1\atop s_2}\right)\quad\,}\ar@/_2pc/[rr]_{[d]} & A_1\times A_2 \ar[r]^{\quad(I_1\ I_2)}\ar@/_2pc/[rr]_{[d]}
	& \Jbar\ar[r]^{\left({s_1\atop s_2}\right)\quad}
	& A_{1}\times A_{2}.}
    \]
\item[(c)] if $i\not=j$, then we have $s_i \zeta_3 I_j = 0\in\Hom(A_j,A_i)$.
	\end{itemize}
\end{proposition}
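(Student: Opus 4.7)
The plan is to handle the three cases of Lemma~\ref{lem:cases} in turn. Case~\textit{(3)} is the simplest and will serve as a template. The reduction $\overline{C}:y^3=x^4+\overline{a}x^2+1$ is smooth of genus~$3$, so $\overline{J}=\Jac(\overline{C})$, and it carries the extra involution $\sigma:(x,y)\mapsto(-x,y)$, which commutes with $\rho=\zeta_3$. Set $d=2$, $e_1=1+\sigma_*$, $e_2=1-\sigma_*$; the relations in~(a) are immediate from $\sigma_*^2=1$ and $\sigma_*^{\!\dagger}=\sigma_*$ (the latter because $\sigma$ is an automorphism of $\overline{C}$ and hence preserves the canonical principal polarization): we get $e_1+e_2=[2]$, $e_i^2=[2]e_i$, $e_1e_2=e_2e_1=0$, and $e_i^{\!\dagger}=e_i$. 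Take $A_i$ to be the image of $e_i$, with $I_i$ the inclusion and $s_i$ the co-restriction of $e_i$. Since $\sigma$ has four fixed points on $\overline{C}$, Riemann--Hurwitz gives $g(\overline{C}/\sigma)=1$, so $A_1\cong\Jac(\overline{C}/\sigma)$ is elliptic and $A_2$ is its two-dimensional Prym complement. The diagram~(b) and the remaining identities are routine. For~(c), $\zeta_3\sigma_*=\sigma_*\zeta_3$ gives $\zeta_3A_i\subseteq A_i$, so $s_i\zeta_3I_j=\zeta_3\,s_iI_j=0$ for $i\ne j$.

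Cases~\textit{(1)} and~\textit{(2)} require the CM assumption in order to make sense of $\overline{J}$ as an abelian variety: by Serre--Tate, CM implies potentially good reduction of~$J$, and after enlarging~$L$ I may assume $\overline{J}$ is an abelian threefold. In each case an elliptic curve appears canonically from the normalization of the singular reduction. In case~\textit{(1)}, the substitution $Y=(x^2\pm 1)/y$ identifies the normalization of $\overline{C}:y^3=(x^2\pm 1)^2$ with the elliptic curve $Y^3=x^2\pm 1$. In case~\textit{(2)}, the substitution $Y=y/x$, $X=2x-Y^3$ identifies the normalization of $\overline{C}:y^3=(x^2+1)x^2$ with the hyperelliptic genus-$2$ curve $X^2=Y^6-4$, and then the extra involution $\tau:Y\mapsto-Y$ has quotient the elliptic curve $X^2=u^3-4$ with $u=Y^2$. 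I take $A_1$ to be this elliptic curve, realised as an abelian subvariety of~$\overline{J}$ via the comparison between~$\overline{J}$ and the Jacobian of the (semi)stable model of~$C$ over~$\O_L$: good reduction of~$J$ forces the toric part of the identity component of~$\overline{J}$ to be trivial, so that $\overline{J}$ is isogenous to a product of abelian varieties attached to the smooth components of the semistable reduction, and $A_1$ sits among them. From this I obtain $I_1$, $s_1$, and $d\in\{1,2\}$ with $s_1I_1=[d]$; then $e_1:=I_1s_1$, $e_2:=[d]-e_1$, and $A_2:=\mathrm{im}(e_2)$, and the remaining identities in~(a)--(b) follow once one checks that the principal polarization descends compatibly along the chosen decomposition (so that $s_1$ and $I_1$ are, up to $[d]$, dual to each other).

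The main obstacle is making this clean decomposition of~$\overline{J}$ rigorous in cases~\textit{(1)} and~\textit{(2)}, including the Rosati-symmetry of~$e_1$, despite~$\overline{C}$ itself being singular; the required input is the theory of Néron models, the semistable reduction theorem, and the structure of~$\overline{J}$ as honestly abelian (not merely semiabelian). The compatibility condition~(c) will then follow because all of the above geometric constructions are $\zeta_3$-equivariant: $\rho:y\mapsto\zeta_3 y$ acts on each normalization and commutes with the auxiliary involution $\tau$ in case~\textit{(2)}, so $\zeta_3$ acts on~$A_1$ and preserves the splitting, giving $s_i\zeta_3I_j=0$ for $i\ne j$. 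The role of $d\in\{1,2\}$ is to absorb the factor of~$2$ that arises when replacing the Rosati-symmetric idempotent $(1\pm\sigma_*)/2\in\End^0(\overline{J})$ by an actual endomorphism, exactly as in case~\textit{(3)}; in fortunate instances (e.g.\ when the elliptic factor splits off directly from~$\overline J$) we will find $d=1$.
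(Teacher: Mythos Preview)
Your case~\textit{(3)} is correct and in fact identical to the paper's: the quotient $\overline{C}\to\overline{C}/\sigma$ is exactly the degree-$2$ cover $\phi:\overline{C}\to E$, $(x,y)\mapsto(y,x^2)$, and for any double cover one has $\phi^*\phi_*=1+\sigma_*$, so your $e_1$ is the paper's $e_1$.

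In the singular cases your sketch contains a genuine confusion. In case~\textit{(2)} you produce \emph{two} elliptic curves: the quotient $X^2=u^3-4$ of the genus-$2$ normalization $C_2$ by the extra involution~$\tau$, and (implicitly) the elliptic curve that must appear as a separate component of the stable reduction so that the total genus is~$3$. You then write ``$A_1$ sits among them'' [the Jacobians of the components], which is false for the first elliptic curve and unjustified for any identification of the two. The extra involution~$\tau$ is a red herring here; the paper never uses it.

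What the paper actually does in cases~\textit{(1)} and~\textit{(2)} is much more direct and gives $d=1$. Since $J$ has CM, it has potentially good reduction; then by \cite[Proposition~4.2 and Corollary~4.3]{BCLLMNO} the stable reduction $\overline{\mathcal{C}}$ is tree-like and $\overline{J}$ is the \emph{polarized product} of the Jacobians of its irreducible components. In case~\textit{($g$)} one component is (via the appendix) the genus-$g$ normalization $C_g$ of the singular plane model, and the remaining components have total genus $3-g$. So simply take $A_g=\Jac(C_g)$ and $A_{3-g}$ the product of the remaining Jacobians; then $\overline{J}=A_1\times A_2$ as principally polarized abelian varieties, $I_i$ and $s_i$ are the canonical inclusions and projections, $e_i=I_is_i$, and all of~(a) and~(b) hold with $d=1$ immediately. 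For~(c), the automorphism $\rho$ of $C$ extends (N\'eron mapping property) to the stable model and, by the explicit formulas, restricts to an automorphism of~$C_g$; hence $\zeta_3$ respects the product splitting, giving $e_g\zeta_3=\zeta_3 e_g=e_g\zeta_3 e_g$ and therefore $e_i\zeta_3 e_j=0$ for $i\ne j$. The key input you were missing is the citation to \cite{BCLLMNO} for the polarized product decomposition; once you have that, no auxiliary involution is needed and the Rosati symmetry is automatic.
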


We prove Proposition~\ref{prop:decomp} separately in the smooth
case and in the singular cases.
The smooth \case{3} is the main case, and the proof
is Section~\ref{sec:cover}. The singular
cases \textit{(1)} and~\textit{(2)} are proven in Section~\ref{sec:singular}.

	\subsection{The smooth case: $y^3=x^4+\overline{a}x^2+1$}
	\label{sec:cover}
    
    We now prove Proposition~\ref{prop:decomp} in 
    the smooth \case{3}, where we will see that
    it holds with $d = 2$.
	We consider the Picard curve $\Cbar:y^3=x^4+\overline{a}x^2+1$.
	The automorphism group $\Aut(\Cbar)$  contains the elements
	$\sigma:(x,y)\mapsto(-x,y)$ of order $2$ and 
	$\rho=\rho_{\Cbar} :(x,y)\mapsto(x,\zeta_3y)$ of order~$3$.

As $C$ has good reduction at~$\mathfrak{p}$,
we have $\Jbar = \Jac(\overline{C})$: we associate to $C$ the $\mathcal{O}_{K,\mathfrak{p}}$-scheme  $\operatorname{Pic}^0(\mathcal{C})$. By \cite[Theorem 9.3.7]{BLR}, the special fiber of $\operatorname{Pic}^0(\mathcal{C})$ is  $\Jac(\overline{C})$ and it is smooth. Finally, by \cite[Theorem 9.5.1]{BLR}, we have that $\operatorname{Pic}^0(\mathcal{C})$ is a Néron model $\mathcal{J}$ of the Jacobian of $\mathcal{C}$. In particular, the special fiber of $\operatorname{Pic}^0(\mathcal{C})$, i.e., $\Jac(\overline{C})$, is isomorphic to the special fiber of $\mathcal{J}$, that is, $\overline{J}$.

	The curve $\Cbar$ is a $2$-cover of the elliptic curve
	$E:\,v^2+av=u^3-1$ with CM by $\mathbb{Z}[\zeta_3]$.
	Indeed, we have
	\begin{align}\label{eq:phi}
	\phi:\qquad\Cbar&\longrightarrow E\\
	 (x,y)&\longmapsto(u,v)=(y,x^2).\nonumber
	\end{align}
	The curve $E$ also has an automorphism
	$\rho = \rho_E : (u,v)\mapsto (\zeta_3 u, v)$
	of order~$3$, and we have
	$\rho_E\circ\phi = \phi\circ \rho_{\overline{C}}$,
	or simply
	\begin{equation}\label{eq:rhophi}
	\rho\phi = \phi\rho.
	\end{equation}
	
	For every curve morphism $f:D_1\rightarrow D_2$,
	we get a pushforward morphism $f_*:\Jac(D_1)\rightarrow \Jac(D_2)$
	and a pullback morphism $f^* : \Jac(D_2)\rightarrow \Jac(D_1)$.
	With this notation, let
	$$e_1 = \phi^*\phi_*,\quad e_2 = [2]-e_1\quad \in\End(\Jbar).$$
	Let $A_i$ be the image of $e_i$ and let $s_i$ be defined
	by the commutative diagram
		    \[\xymatrix{\Jbar\ar@{->>}[r]_{s_i}\ar@/_2pc/[rr]_{e_i} & A_i \ar@{^{(}->}[r]_{I_i} & \Jbar}.\]
	Let $d = 2$.
	The equality $e_1+e_2 = 2$ is the definition of $e_2$.
	As $\phi$ is a $2$-cover, we get
	\begin{equation}
	\phi_*\phi^* = [2]\quad\in\End(E).
	\end{equation}
	In particular, we get
	$e_1^2 = \phi^*\phi_*\phi^*\phi_* = \phi^*[2]\phi_* = 2e_1$
	and $e_2^2 = 4-4e_1+e_1^2 = 2e_2$.
	We also get $e_1e_2 = e_1([2]-e_1) = 2e_1-2e_1=0$
	and similarly $e_2e_1=0$.
	
	By Mumford \cite[pages 327--328]{MumfordPrym},
	if $f : D_1\rightarrow D_2$ is a non-constant curve morphism and
    $(\Jac(D_i), \lambda_i)$ is the Jacobian of $D_i$
	with its polarization,
	then $$(f_*)^\vee = \lambda_1 f^* \lambda_2^{-1}
	\quad\mbox{and}\quad
	\lambda_i^\vee = \lambda_i.$$
	Taking duals, we also have
	$(f^{*})^{\vee} = \lambda_2^{\phantom{1}} f_* \lambda_1^{-1}$.
	
	In particular, we get
	\begin{align*}
	e_1^\dagger &=
	\lambda_{\overline{C}}^{-1} (\phi^*\phi_*)^\vee \lambda_{\overline{C}}^{\phantom{1}}
	= \lambda_{\overline{C}}^{-1} (\phi_*)^\vee (\phi^{*})^{\vee} \lambda_{\overline{C}}^{\phantom{1}}
	= \phi^*\phi_* = e_1
	\end{align*}
	and $e_2^\dagger = [2]_{\phantom{1}}^\dagger -e_1^\dagger = [2]-e_1=e_2$.
	
	The identities $e_i = I_is_i$ are the definition
	of~$s_i$.
	To compute $ s_i I_j$, we compose with the surjective
	map $s_j$ and the injective map $I_i$.
	If $i=j$, then we get
	$$I_i (s_i I_i) s_i = e_i^2 = 2e_i = I_i [2] s_i.$$
	By surjectivity of $s_j$ and injectivity of $I_i$, this gives
	$s_i I_i = [2]$.
	If $i\not=j$, then we get
	$$I_i (s_i I_j) s_j = e_i e_j = 0 = I_i [0] s_j,$$
	hence again by surjectivity and injectivity we get
	$s_i I_j = 0$.
	This proves~(a).
	
	Commutativity of the diagram follows from
	$I_1s_1+I_2s_2 = e_1+e_2=2$
	and
	the formulas for~$s_iI_j$. 
	The dimension of $A_1$ is the dimension of~$E$, which is~$1$.
	The commutativity of the diagram shows that $A_1\times A_2$
	has the same dimension as $J$, hence
	$A_2$ has dimension~$2$, which proves~(b).

Finally, we prove (c).
Since $I_i$ is injective and $s_j$ is surjective, it suffices to 
prove $I_i s_i \zeta_3 I_j s_j = 0$,
that is, $e_i \zeta_3 e_j = 0$.

Recall $\zeta_3 = \rho_*$, and by~\eqref{eq:rhophi},
we have $\phi_*\rho_* = \rho_*\phi_*$. Hence we get
\begin{align*}
e_1\zeta_3 e_1 &=  \phi^*\phi_*\rho_*\phi^*\phi_*\\
&=  \phi^*\rho_*\phi_*\phi^*\phi_*\\
&=  \phi^*\rho_*[2]\phi_*\\
&=  \phi^*\phi_*\rho_*[2] = 2e_1\zeta_3.
\end{align*}
In particular, we get $e_1\zeta_3 e_2 = 2e_1\zeta_3-2e_1\zeta_3 = 0$.

Also, we have $\rho^*\rho_* = 1$, so $\zeta_3 = (\rho^*)^{-1}$.
Therefore, we also have
\begin{align*}
e_1\zeta_3 e_1 &=  \phi^*\phi_*(\rho^*)^{-1}\phi^*\phi_*\\
&=  \phi^*\phi_*\phi^*(\rho^*)^{-1}\phi_*\\
&=  \phi^*[2](\rho^*)^{-1}\phi_*\\
&=  [2](\rho^*)^{-1} \phi^*\phi_* = 2\zeta_3 e_1.
\end{align*}
In particular, we get $e_2\zeta_3 e_1 = 2\zeta_3 e_1 - 2\zeta_3 e_1 = 0$.

This proves Proposition~\ref{prop:decomp} with $d=2$ in \case{3}.\qed

\begin{remark}
	We did not need to write $A_2$ as the Jacobian of an
	explicit
	curve for our work. However, for those who are interested,
	if $\Cbar : y^3 = x^4 + ax^2 + 1$ with $a\not = 0, -2, 2$ in a field
	of characteristic not $2$ or $3$, then a special case of
	Ritzenthaler-Romagny \cite[Theorem~1.1]{RitzenthalerRomagny}
	gives $\Jbar \sim E \times \Jac(H)$ with
	$E$ as in Section~\ref{sec:cover} and
	$$H : -a y^2 = (x^{2} + 2 x - 2) \cdot (x^{4} + 4 x^{3} + \left(2 a^{2} - 8\right) x - a^{2} + 4).$$
\end{remark}

\subsection{The singular cases}\label{sec:singular}

In the singular cases, by Kılıçer-Lauter-Lorenzo-Newton-Ozman-Streng~\cite[Theorem~1.1]{KLLNOS},
we already have a bound $p < \frac{1}{8} B^{10}$
under the hypotheses of Theorem~\ref{thm:main}.
However, we will see that we can do better.

In this section, we prove Proposition~\ref{prop:decomp}
in the singular cases \textit{(1)} and~\textit{(2)},
where we will see that it holds with $d=1$.
In case \textit{($g$)} for $g \in\{1,2\}$, let $A_g$ be the Jacobian
of the \emph{smooth} model $C_g$ of the curve of
geometric genus $g$ listed
in Lemma~\ref{lem:cases}\textit{($g$)}.

	Since the curve $C$ has CM, Proposition $4.2$ in
	Bouw-Cooley-Lauter-Lorenzo-Manes-Newton-Ozman~\cite{BCLLMNO}
	applies, so the
	reduction $\overline{\mathcal{C}}$ of a stable model $\mathcal{C}$ of $C$ is tree-like and the reduction~$\overline{J}$ of its Jacobian $J=\Jac(C)$
is the polarized product of the Jacobians of the irreducible components of $\overline{\mathcal{C}}$.

Then Corollary~4.3 of \cite{BCLLMNO} states that the reduction of the stable model is a union of either three smooth curves of genus 1 or a smooth curve of genus 1 and a smooth curve of genus~2.
By Lemma~\ref{lem:components_of_bad_red} of the appendix
(see also Corollary~\ref{cor:components_of_bad_red}),
one of these curves
is isomorphic to the curve~$C_g$.
	We conclude 
	that the reduction of the stable model
	is the union of a copy of $C_g$
	and up to two additional smooth curves of total genus $3-g$.
	Let $A_g$ be the Jacobian of $C_g$ and let
	$A_{3-g}$ be the polarized product of the Jacobians
	of those additional curves, so 
	\begin{equation}\label{eq:jbarisa1a2}
	\Jbar = A_1\times A_2
	\end{equation}
	as principally polarized abelian varieties.

	For $i\in\{1,2\}$,
	let $I_i$ be the inclusion map of $A_i$ into $\Jbar$
	and let $s_i$ be the projection map of $\Jbar$ onto~$A_i$.
	Let $e_i = I_is_i$. Then we get $s_iI_j = 0$ if $i\not=j$
	and $s_iI_i = [d]$ with $d=1$. As \eqref{eq:jbarisa1a2}
	is an identity
	of principally polarized abelian varieties, we 
	get $e_1^\dagger = e_1$,  $e_2^\dagger=e_2$, and $e_1+e_2=[1]$.
	The identities $e_i^2=e_i$ and $e_1e_2=e_2e_1=0$
	now follow from the identities in terms of $I_i$ and $s_i$,
	and the commutativity of the diagram follows
	from all the given identities.
	This proves (a) and (b). 
	
	Next we prove (c). As $I_i$ is an injective map and $s_j$ is a surjective one, it suffices to 
	prove $I_i s_i \zeta_3 I_j s_j = 0$,
	that is, $e_i \zeta_3 e_j = 0$.
	By the N\'eron mapping property, the automorphism $\rho$
	of $C$ uniquely extends to an automorphism of the stable model.
	And by the explicit equations in Lemma~\ref{lem:cases}, it also
	extends to an automorphism of order $3$ of $C_g$.
	Let $\zeta_3$ denote not only $\rho_*$ on $\Jbar$,
	but also $\rho_*$ on $A_c$.
Then we get $s_g\zeta_3=\zeta_3s_g$ and $\zeta_3I_g=I_g\zeta_3$. So, we get
	\begin{align*}
		e_g\zeta_3 e_g &= I_gs_g\zeta_3I_gs_g\\
		&=  I_gs_gI_gs_g\zeta_3\\
		&= I_gs_g\zeta_3=\zeta_3I_gs_g\\
		&=e_g\zeta_3=\zeta_3e_g.
	\end{align*}
	In particular, we get $e_{g}\zeta_3 e_{3-g} = e_g\zeta_3-e_g\zeta_3 = 0$ and $e_{g-3}\zeta_3 e_g = \zeta_3e_g-e_g\zeta_3e_g = 0$.
	
	This proves Proposition~\ref{prop:decomp} in cases \textit{(1)} and~\textit{(2)}.
	Case~\textit{(3)} was done in the previous section.\qed

\section{Decomposition and matrices}\label{sec:decomp}

If a prime $\mathfrak{p}$ does not divide~$6$ and does appear in the denominator
of~$j(C)$, then Section~\ref{sec:reduction} shows that $\overline{J}$
is isogenous (via the isogeny $F_0$) to a product of abelian varieties $A_1$ and $A_2$
of lower dimension. We also got lots of information about the isogeny~$F_0$,
and how it behaves with respect to the third root of unity
$\zeta_3=\rho_*\in \End(\overline{J})$ (see Proposition~\ref{prop:decomp}).

In this section we show that if~$J$ has complex multiplication,
then we can use an element~$\mu$ of the endomorphism ring of~$J$
to decompose $A_2$ further.

Just the fact that $A_2$ is decomposable is not enough.
In order to have small and explicit bounds in the end, it is crucial
that we can compute the degree of the isogeny $A_2\rightarrow A_1\times A_1$
in terms of elements of~$\mathcal{O}$.

So suppose from now on that we are in the situation
of the hypotheses of the main theorem (Theorem~\ref{thm:main}).
In other words, we have $\End(J_{\overline{L}})= \mathcal{O}$ for an order $\mathcal{O}$
in a sextic CM field~$K$, we have a totally real element $\mu\in \Z + 2\mathcal{O}\setminus\Z$,
an absolute Picard curve invariant~$j$, and 
a prime $\mathfrak{p}$ of $L$ lying over a rational prime~$p$
such that $\mathrm{ord}_{\mathfrak{p}}(j(C))<0$.

Suppose for now that $p\not=2,3$.

We get $\zeta_3\in \mathcal{O}$ (see Section~\ref{ssec:CM})
and hence $K = K_+(\zeta_3)$ for the totally real
cubic field $K_+$ of~$K$.

Our goal is only to bound $p$,
so without loss of generality we assume that all
elements of $\End(J_{\overline{L}})$
and the isomorphisms and models of
Lemma~\ref{lem:cases} are defined over~$L$.

	\begin{remark}
	In Kılıçer-Lauter-Lorenzo-Newton-Ozman-Streng~\cite{KLLNOS},
	a $\mu$ is taken with $\mu^2 \in K_+$ totally negative.
	In our situation, we can switch between totally negative and totally positive
	$\mu^2$ by replacing $\mu$ by $(2\zeta_3+1)\mu$, and the proof
	remains roughly the same. To
	make the proof as simple as possible,
	we will work with totally
	positive $\mu^2$, that is, totally real $\mu$.
\end{remark}

Let $F_0 = (I_1\ I_2) : A_1\times A_2\rightarrow \Jbar$
be the isogeny from Proposition~\ref{prop:decomp}(b),
and let $s_i$ and $d$ also be as in that proposition.
   We get an embedding
	\begin{align*}
	\iota_0: \End(\Jbar)\otimes \Q & \longrightarrow \End(A_1\times A_2)\otimes \Q,\\
	\alpha &\longmapsto F_0^{-1} \alpha F_0
	\quad =\quad
	\frac{1}{d} \left({s_1\atop s_2}\right)\circ  \alpha\circ (I_1\ I_2)
	= \frac{1}{d}
	\left(\begin{array}{cc}
	s_1\alpha I_1 & s_1\alpha I_2\\
	s_2\alpha I_1 & s_2\alpha I_2\end{array}\right)
	\end{align*}
	sending
	\begin{equation}\label{eq:firstembedding}
	\Z + 2\mathcal{O}\subset \Z + d\End(\Jbar)\rightarrow\End(A_1\times A_2).
	\end{equation}

	Write
	\begin{equation}\label{eq:iota0mu}
	\iota_0(\mu) = \xyzw,
	\end{equation}
	where the size of a box reflects the dimension
	of the domain and codomain of the homomorphism.
	As $\mu\in \Z+2\mathcal{O}$, by \eqref{eq:firstembedding},
	we get
	$x = \frac{1}{d}s_1\mu I_1\in \End(A_1)$,
	$y = \frac{1}{d}s_1\mu I_2 \in \Hom(A_2,A_1)$,
	$z=\frac{1}{d}s_2\mu I_1\in \Hom(A_1,A_2)$, and
	$w=\frac{1}{d}s_2\mu I_2\in\End(A_2)$.
	\begin{lemma}\label{lem:sqrtmin3}
	We have $$\iota_0(2\zeta_3+1) = \EndEA{r_1}{0}{0}{r_2},$$
	where $r_i\in\End(A_i)$
	satisfy $r_i^2=-3$.
	\end{lemma}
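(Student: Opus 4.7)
The plan is to use that $\beta := 2\zeta_3+1$ satisfies $\beta^2 = 4\zeta_3^2 + 4\zeta_3 + 1 = -3$ in $\mathcal{O}$, and that $\beta \in \Z + 2\mathcal{O}$, so by \eqref{eq:firstembedding} the element $\iota_0(\beta)$ really does lie in $\End(A_1\times A_2)$ (not merely in the rational endomorphism algebra). So everything is integral from the start.

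The first step is to show the off-diagonal entries of $\iota_0(\beta)$ vanish. Writing $\iota_0(\beta) = \EndEA{r_1}{y'}{z'}{r_2}$ exactly as in \eqref{eq:iota0mu}, the off-diagonal entries are, for $i\neq j$,
\[ \tfrac{1}{d}\, s_i \beta I_j \;=\; \tfrac{2}{d}\, s_i \zeta_3 I_j \;+\; \tfrac{1}{d}\, s_i I_j. \]
Both terms vanish: $s_i I_j = 0$ by Proposition~\ref{prop:decomp}(a), and $s_i \zeta_3 I_j = 0$ by Proposition~\ref{prop:decomp}(c). Hence $y' = 0$ and $z' = 0$, and we obtain a block-diagonal matrix with diagonal entries $r_i = \tfrac{1}{d} s_i \beta I_i \in \End(A_i)$.

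Next I would use that $\iota_0$ is a ring homomorphism. This is because $\iota_0(\alpha) = F_0^{-1} \alpha F_0$ where $F_0$ is an isogeny and $F_0^{-1}$ denotes its inverse in the isogeny category $\End\otimes\Q$; composing two such conjugations multiplies correctly. Therefore
\[ \iota_0(\beta)^2 \;=\; \iota_0(\beta^2) \;=\; \iota_0(-3) \;=\; \EndEA{-3}{0}{0}{-3}. \]
Squaring the block-diagonal expression for $\iota_0(\beta)$ and comparing diagonal blocks yields $r_i^2 = -3$ in $\End(A_i)\otimes\Q$. Since both sides are integral endomorphisms, the identity holds in $\End(A_i)$, completing the proof.

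The only genuinely subtle point is making sure $r_i$ lives in $\End(A_i)$ rather than in $\End(A_i)\otimes\Q$; this is guaranteed by \eqref{eq:firstembedding} applied to $\beta \in \Z+2\O$, which is exactly why the author takes $\mu \in \Z+2\O$ in the main theorem. Everything else is the trivial identity $(2\zeta_3+1)^2=-3$ together with the vanishing statements packaged in parts (a) and (c) of Proposition~\ref{prop:decomp}; no further geometry is needed.
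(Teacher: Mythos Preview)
Your proof is correct and follows essentially the same approach as the paper: vanish the off-diagonal blocks using $s_iI_j=0$ and $s_i\zeta_3 I_j=0$ from Proposition~\ref{prop:decomp}(a,c), then square the block-diagonal matrix and use $(2\zeta_3+1)^2=-3$. The paper's own proof is just a two-sentence version of exactly this; your added remarks on integrality via \eqref{eq:firstembedding} make explicit what the paper leaves implicit.
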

	\begin{proof}
		The off-diagonal boxes are zero by the equalities $s_i \zeta_3 I_j = s_i I_j=0$
		of Proposition~\ref{prop:decomp}(a,c).
		This gives the shape of the matrix. As its square is $-3$, we
		get $r_i^2 = -3$.
	\end{proof}
	
\begin{lemma}
	\label{lem:F1}
	The homomorphism 
\begin{align*}
F_1 =  \homEEEtoEA{1}{0}{0}{0}{z}{\!\! wz \!\!}
: A_1\times A_1\times A_1 & \rightarrow A_1\times A_2\\
(P,Q,R) &\mapsto (P, z(Q) + wz(R))
\end{align*}
is an isogeny.
\end{lemma}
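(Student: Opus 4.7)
My plan is to reduce, via a change of variables, to the claim that the simpler map
$$\phi_1 : A_1^3 \to \overline{J}, \qquad (P, Q, R) \mapsto I_1 P + (\mu I_1) Q + (\mu^2 I_1) R,$$
is an isogeny, and then to argue this must hold because the totally real cubic field $K_+ = \Q(\mu)$ cannot act on any abelian variety of dimension at most~$2$.

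The first step is to compose $F_1$ with the isogeny $F_0 = (I_1\ I_2) : A_1 \times A_2 \to \overline{J}$ from Proposition~\ref{prop:decomp}(b). Unpacking the defining identity $F_0 \circ \iota_0(\mu) = \mu \circ F_0$ entrywise yields $\mu I_1 = I_1 x + I_2 z$ and $\mu I_2 = I_1 y + I_2 w$, so that $I_2 z = \mu I_1 - I_1 x$ and $I_2 w = \mu I_2 - I_1 y$. Substituting these into $F_0 F_1(P, Q, R) = I_1 P + (I_2 z) Q + (I_2 w z) R$ and regrouping by powers of $\mu$ applied to $I_1$ gives
$$F_0 F_1(P, Q, R) = I_1(P - xQ - yz R) + (\mu I_1)(Q - xR) + (\mu^2 I_1) R.$$
A short calculation using $\mu \in \Z + 2\O$ (write $\mu = n + 2\alpha$ with $\alpha \in \O$) together with $s_i I_i = [d]$ and $e_2 = [d] - e_1$ from Proposition~\ref{prop:decomp}(a) confirms that $x, yz \in \End(A_1)$, not merely in $\End(A_1) \otimes \Q$. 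Thus the upper-unitriangular map $U(P, Q, R) = (P - xQ - yz R,\, Q - xR,\, R)$ is a genuine endomorphism of $A_1^3$, and so an isogeny, and $F_0 F_1 = \phi_1 \circ U$. Since $F_0$ and $U$ are both isogenies, $F_1$ is an isogeny iff $\phi_1$ is.

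Because $\dim A_1^3 = \dim \overline{J} = 3$, it suffices to show that $\phi_1$ is surjective, i.e., that $V := I_1(A_1) + (\mu I_1)(A_1) + (\mu^2 I_1)(A_1)$ equals~$\overline{J}$. This $V$ is an abelian subvariety of $\overline{J}$ stable under multiplication by~$\mu$: since $\mu^3$ lies in the $\Q$-span of $1, \mu, \mu^2$ (as $K_+$ is cubic), we have $\mu V \subseteq V$. Restriction therefore yields a ring homomorphism $K_+ \to \End(V) \otimes \Q$, which is nonzero (the unit $\mu \in K$ makes $\mu I_1(A_1)$ positive-dimensional) and hence injective since $K_+$ is a field. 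If $\dim V < 3$, this would force the totally real cubic field $K_+$ to embed in $\End(V) \otimes \Q$ for an abelian variety $V$ of dimension $1$ or~$2$, which is impossible by a short case analysis via Albert's classification: in each case either $[K_+:\Q] = 3$ fails to divide $\dim_\Q \End(V) \otimes \Q$, or else (for $V$ isogenous to $E^2$ with $\End(E) \otimes \Q$ an imaginary quadratic field~$F$) the commutative composite $K_+ \cdot F \subset M_2(F)$ would be a subfield of $\Q$-dimension $6$, exceeding the maximum~$4$. Hence $V = \overline{J}$, $\phi_1$ is surjective, and $F_1$ is an isogeny. The main work lies in the bookkeeping of the first step, particularly the integrality of~$x$ and~$yz$; the exclusion of cubic real multiplication on low-dimensional varieties at the end is standard.
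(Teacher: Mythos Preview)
Your argument is correct and rests on the same core fact the paper uses---that $\mu$ has degree greater than~$2$ over~$\Q$---but is packaged differently. The paper strips off the first factor to reduce to showing that $(z,\ wz):A_1^2\to A_2$ is an isogeny and then cites \cite[Lemma~3.1]{KLLNOS}. You instead compose with $F_0$ and make the unipotent change of variables $U$ to reduce to the transparent map $\phi_1:(P,Q,R)\mapsto I_1P+\mu I_1Q+\mu^2 I_1R$, whose image is visibly $\mu$-stable; the obstruction to surjectivity then becomes an embedding of the cubic field $K_+$ into $\End^0$ of an abelian variety of dimension at most~$2$. Your version is self-contained and makes the role of $\Q(\mu)$ very clean; the paper's is terser but defers the content to the reference.

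One remark on the final step: you can replace the Albert case analysis entirely by the standard fact that for any unital embedding of a number field $F$ into $\End^0(V)$ one has $[F:\Q]\mid 2\dim V$ (restrict the degree form on $\End^0(V)$ to~$F$; it is then a power of $N_{F/\Q}$, and comparing homogeneity degrees gives the divisibility). Since $3\nmid 2$ and $3\nmid 4$, this finishes at once. It also patches a small gap in your dichotomy: when $V\sim E_1\times E_2$ with $E_1\not\sim E_2$ and $\dim_\Q\End^0(E_1)=1$, $\dim_\Q\End^0(E_2)=2$, one has $\dim_\Q\End^0(V)=3$, which \emph{is} divisible by~$3$ yet is not the $M_2(F)$ case, so neither branch of your stated alternative literally covers it (though projecting $K_+$ to each factor disposes of it immediately). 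Incidentally, the separate $M_2(F)$ argument is redundant anyway, since $\dim_\Q M_2(F)=8$ is not divisible by~$3$.
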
	
\begin{proof}
	It is necessary and sufficient to prove that
	the map $A_1\times A_1\rightarrow A_2$
	given by $(Q, R)\mapsto z(Q)+zw(R)$ is an isogeny.
	But this is analogous to \cite[Lemma~3.1]{KLLNOS},
	and the proof is identical.
	We only use that $\mu$ does not have degree $1$ or $2$ over~$\Q$.
\end{proof}

\begin{remark}
	An alternative choice of isogeny $F_1: A_1^3\rightarrow A_1\times A_2$
	is obtained by replacing $wz$ by
	$z' = \frac{1}{2} (zx+wz).$
	Indeed, write $\mu = i + 2j$ with $i\in\{0,1\}$ and $j\in\OO$.
	Then we get $\frac{1}{2}(\mu^2-i) = 2(j^2-ij)\in 2\OO$.
	As $z'$ is the lower left entry of $\iota_0(\frac{1}{2}(\mu^2-i))$,
	it is in $\Hom(A_1,A_2)$.
	Then instead of $F_1$ use
	\begin{align*}
	\homEEEtoEA{1}{0}{0}{0}{z}{z'}
	: A_1\times A_1\times A_1 & \rightarrow A_1\times A_2\\
	(P,Q,R) &\mapsto (P, z(Q) + z'(R)).
	\end{align*}
	This gives a bound in the end whose valuation at $2$ is better,
	but still non-optimal.
	As it makes the formulas more complicated, we
	we will not consider it further in this article,
	but we give this choice as an option in our SageMath implementation.
\end{remark}
Let $\mathcal{R} = \End(A_1)$ and $\mathcal{B}=\mathcal{R}\otimes\Q$.
We get an isogeny $F = F_0 F_1$ and ring homomorphisms
	\begin{align*}
	\iota_1\ :\  \End(A_1\times A_2) &\longrightarrow \Mat_{3\times 3}(\mathcal{B})\\
	 f&\longmapsto F_1^{-1} f F_1,\quad\quad\mbox{and}\\
	\iota = \iota_1\circ \iota_0\ :\ \End(\Jbar) &\longrightarrow \Mat_{3\times 3}(\mathcal{B}),\\
	 \alpha&\longmapsto F^{-1} \alpha F.
	\end{align*} 
	Take $n\in\Z_{>0}$ such that
	\begin{equation}\label{eq:conditiononn}
	[n]\ker(F_1)=0.
	\end{equation}
 In \eqref{eq:explicitn} below,
	we will take a specific $n$.
	
\section{Using commutativity to get matrices over a field}	\label{sec:commute}
	
	In this section we use the fact that we have an explicit
	$\zeta_3$ that commutes with $\mu$ in order to find
	that the entries of the $3\times 3$ matrix $\iota(\mu)$
	from Section~\ref{sec:decomp} all lie in the same
	quadratic field.
	In the proof of the previous bounds
	(Goren-Lauter~\cite{GorenLauter} for $g=2$ and
	\cite{BCLLMNO, KLLNOS}
	for $g=3$),
	we had no such $\zeta_3$, and
	the proof that the entries were in a quadratic field
	was based instead on the fact
	that ``small'' elements of large-discriminant quaternion algebras commute,
	hence that argument worked only for very large primes.
	Because of our explicit decomposition,
    our proof is much simpler and our results are much sharper.

	We also get various relations between the entries, which we
	use in Section~\ref{sec:endofproof} to bound the entries.
	
	\begin{lemma}\label{lem:matrix}
		For $\mu$ and $\iota$ as in Section~\ref{sec:decomp}
		and every $\alpha\in \Z+2\mathcal{O}$, 
		the following hold:
		\begin{enumerate}
			\item the entries of the $3\times 3$ matrix $\iota(\alpha)$ are in $\frac{1}{n}\mathcal{R}$, and the entries of the top row are
			in~$\mathcal{R}$, where $\mathcal{R}=\End(A_1)$,
\item we have
		$$
		\iota(\mu)=\left(\begin{array}{ccc}x & a & b \\ 1 & 0 & e \\ 0 & 1 & f\end{array}\right),
		$$
		with $x$, $a$, $b$, $ne$, $nf\in \mathcal{R}$, and
\item we have
		$$
\iota(2\zeta_3+1)=\left(\begin{array}{ccc}r_1 & 0 & 0 \\ 0 & s & t \\ 0 & u & v\end{array}\right)
$$
		with $r_1$, $ns$, $nt$, $nu$, $nv\in\mathcal{R}$ and $r_1^3 = -3$.
		\end{enumerate}
	\end{lemma}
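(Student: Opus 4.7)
The plan is to work from the defining relation $F_1\iota(\alpha)=\iota_0(\alpha)F_1$ in $\Hom(A_1^3,A_1\times A_2)$, exploiting the block form $F_1=\bigl(\begin{smallmatrix}1&0&0\\0&z&wz\end{smallmatrix}\bigr)$ to read off entries row by row, using~\eqref{eq:conditiononn} to control denominators, and using the fact that $F_1$ is an isomorphism in the isogeny category to force uniqueness of the non-obvious entries.

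For part~(1), since $[n]\ker(F_1)=0$ there is a genuine morphism $nF_1^{-1}:A_1\times A_2\to A_1^3$ (the complementary isogeny). For $\alpha\in\Z+2\mathcal{O}$, \eqref{eq:firstembedding} gives that $\iota_0(\alpha)$ is a genuine endomorphism of $A_1\times A_2$, so
$$n\iota(\alpha)=(nF_1^{-1})\iota_0(\alpha)F_1\in\End(A_1^3)=\Mat_{3\times 3}(\mathcal{R}),$$
showing that all entries of $\iota(\alpha)$ lie in $\tfrac{1}{n}\mathcal{R}$. For the top row, the first row of $F_1\iota(\alpha)$ equals the first row of $\iota(\alpha)$ itself, because the first row of $F_1$ is $(1,0,0)$. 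Writing $\iota_0(\alpha)=\bigl(\begin{smallmatrix}x'&y'\\z'&w'\end{smallmatrix}\bigr)$ with $x'\in\mathcal{R}$ and $y'\in\Hom(A_2,A_1)$, the first row of $\iota_0(\alpha)F_1$ is directly $(x',\,y'z,\,y'wz)\in\mathcal{R}^3$, since each entry is a composition of genuine morphisms whose codomain is~$A_1$.

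For part~(2), take $\alpha=\mu$ and compute directly
$$\iota_0(\mu)F_1=\begin{pmatrix}x&yz&ywz\\ z&wz&w^2z\end{pmatrix}.$$
Writing $\iota(\mu)=(m_{ij})$ and matching rows, the top row is $(x,yz,ywz)$ and, column by column, the bottom two rows must satisfy $z\,m_{2j}+wz\,m_{3j}=(\iota_0(\mu)F_1)_{2j}$. For $j=1,2$ the obvious solutions $(m_{21},m_{31})=(1,0)$ and $(m_{22},m_{32})=(0,1)$ are \emph{unique} in $\mathcal{B}^2$: if $za+wzb=0$ for some $a,b\in\mathcal{B}$, the matrix supported only in a single column with entries $(0,a,b)$ is killed by $F_1$, hence vanishes because $F_1$ is an isomorphism in the isogeny category, forcing $a=b=0$. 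The third column then yields $(m_{23},m_{33})=(e,f)$ with $ne,nf\in\mathcal{R}$ by part~(1), giving the claimed form. Part~(3) is strictly parallel: Lemma~\ref{lem:sqrtmin3} gives $\iota_0(2\zeta_3+1)=\bigl(\begin{smallmatrix}r_1&0\\0&r_2\end{smallmatrix}\bigr)$ with $r_1^2=-3$, so
$$\iota_0(2\zeta_3+1)F_1=\begin{pmatrix}r_1&0&0\\0&r_2z&r_2wz\end{pmatrix},$$
and the same matching procedure forces the stated block form (the top row becomes $(r_1,0,0)$, and uniqueness forces $m_{21}=m_{31}=0$). The only real content beyond bookkeeping is the uniqueness lemma ``$za+wzb=0\Rightarrow a=b=0$", which amounts to the $\Q$-linear independence of the last two columns of~$F_1$ as elements of $\Hom(A_1,A_1\times A_2)\otimes\Q$.
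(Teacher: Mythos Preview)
Your proof is correct and follows essentially the same approach as the paper's. The paper writes down the complementary isogeny $G=nF_1^{-1}$ explicitly as a block matrix and multiplies out $F_1^{-1}\iota_0(\alpha)F_1$ directly, while you use the equivalent relation $F_1\iota(\alpha)=\iota_0(\alpha)F_1$ and read off entries column by column, invoking injectivity of left-multiplication by~$F_1$ where the paper uses the defining identity $\tfrac{1}{n}\bigl(\begin{smallmatrix}g_1\\g_2\end{smallmatrix}\bigr)(z,\,wz)=\identitymatrix{2}$; these are the same mechanism packaged differently. (Incidentally, the ``$r_1^3=-3$'' in the statement is a typo for $r_1^2=-3$, as you correctly cite from Lemma~\ref{lem:sqrtmin3}.)
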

	\begin{proof}
Let $G:A_1\times A_2\rightarrow A_1\times A_1\times A_1$ be the isogeny satisfying $GF_1=[n]$,
which exists because of~\eqref{eq:conditiononn}.
Then we have
$$F_1 = \homEEEtoEA{1}{0}{0}{0}{z}{wz}\qquad
\mbox{and}\qquad
F_1^{-1} = \frac{1}{n}G = \frac{1}{n}\homEAtoEEE{n}{0}{0}{g_1}{0}{g_2}$$
for some~$g_i:A_2\rightarrow A_1$
satisfying $\frac{1}{n}({g_1\atop g_2})(z,wz) = ({1\atop 0}{0\atop 1})$.

		For $i,j\in\{1,2\}$, the $(i,j)$-entry of $\iota_0(\alpha)$
		is in $\Hom(A_j, A_i)$.
		Now, because of the shape of $F_1^{\vphantom{-1}}$ and $F_1^{-1}$,
		the matrix $\iota(\alpha) = F_{1}^{-1} \iota_0(\alpha)F_{1}^{\vphantom{-1}}$
		has entries in $\frac{1}{n}\mathcal{R}$
		with the entries of the top row in $\mathcal{R}$. This proves~\textit{(1)}.
		
For \textit{(2)}, we now only have to compute the lower left $2\times 2$ block,
so	
$$
\iota(\mu)=F_1^{-1}\iota_0(\mu)F_1^{\vphantom{-1}}=\frac{1}{n}\homEAtoEEE{*}{*}{0}{g_1}{0}{g_2}\EndEA{*}{*}{z}{w}\homEEEtoEA{1}{0}{*}{0}{z}{*}=\left(\begin{array}{ccc}* & * & *\\ 1 & 0 & *\\ 0 & 1 & *\end{array}\right).
$$
For \textit{(3)}, we note that by Lemma~\ref{lem:sqrtmin3}
we are multiplying block-diagonal matrices as follows:
\[
\iota(2\zeta_3+1)=F_1^{-1}\iota_0(2\zeta_3+1)F_1^{\vphantom{-1}}=\homEAtoEEE{1}{0}{0}{*}{0}{*}\EndEA{r_1}{0}{0}{*}\homEEEtoEA{1}{0}{0}{0}{*}{*}
=\left(\begin{array}{ccc}r_1 & 0 & 0\\ 0 & * & *\\ 0 & * & *\end{array}\right).
\qedhere
\]
	\end{proof}
\begin{remark}
	Lemma~\ref{lem:matrix}\textit{(2)} and its proof are
	analogous to Lemma 3.2 of
	Kılıçer-Lauter-Lorenzo-Newton-Ozman-Streng~\cite{KLLNOS}
	and its proof.
\end{remark}

The following lemma is one of the things that distinguishes our proof
from the proofs of earlier denominator bounds. It shows that
all entries of the matrices in Lemma~\ref{lem:matrix}
commute.
Contrary to the previous bounds,
it shows this without the need
for using that small elements in large-discriminant
quaternion rings commute, and hence without assuming
that $p$ is large.

\begin{lemma} \label{lem:Q(r)}
In the notation of Lemma~\ref{lem:matrix},
we have $v=s=r_1$ and $u=t=0$. Moreover, all of $x$, $a$, $b$, $e$, $f$
and all entries of $\iota(\alpha)$ for all $\alpha\in K$
are in $\Q(r_1)$.
\end{lemma}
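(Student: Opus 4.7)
The plan is to exploit the fact that $\mu$ and $2\zeta_3+1$ commute in the CM field~$K$. Since $\iota$ restricted to $K\subset \End(\Jbar)\otimes \Q$ is a ring homomorphism, the $3\times 3$ matrices $\iota(\mu)$ and $\iota(2\zeta_3+1)$ commute in $\Mat_{3\times 3}(\mathcal{B})$. First, I would multiply out both products from the explicit shapes given in Lemma~\ref{lem:matrix}(2,3) and compare entries. The $(3,1)$-entries give $0=u$; the $(2,1)$-entries give $r_1=s$; the $(2,2)$-entries give $eu=t$, which combined with $u=0$ forces $t=0$; and the $(3,2)$-entries give $s+fu=v$, which combined with $u=0$ and $s=r_1$ gives $v=r_1$. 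This establishes the first assertion $s=v=r_1$ and $u=t=0$.

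As a consequence, $\iota(2\zeta_3+1)$ is the diagonal matrix with $r_1$ on every diagonal entry. For any $M=(m_{ij})\in \Mat_{3\times 3}(\mathcal{B})$, an entrywise calculation shows that $M$ commutes with $\iota(2\zeta_3+1)$ if and only if $m_{ij}r_1=r_1 m_{ij}$ for every $i,j$, i.e., every entry of $M$ lies in the centralizer $Z_{\mathcal{B}}(r_1)$ of $r_1$ in $\mathcal{B}$.

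The final step is to identify $Z_{\mathcal{B}}(r_1)$ with $\Q(r_1)$. The algebra $\mathcal{B}=\End(A_1)\otimes\Q$ is either the imaginary quadratic field $\Q(\zeta_3)$ (when $A_1$ is ordinary, with CM by an order in $\Q(\zeta_3)$) or a quaternion algebra over~$\Q$ (when $A_1$ is supersingular). Since $r_1^2=-3$, we have $\Q(r_1)=\Q(\sqrt{-3})=\Q(\zeta_3)$ inside~$\mathcal{B}$. In the ordinary case $\mathcal{B}=\Q(r_1)$ is commutative, so trivially $Z_{\mathcal{B}}(r_1)=\Q(r_1)$. In the supersingular case $\Q(r_1)\subset \mathcal{B}$ is a quadratic subfield of a quaternion algebra, hence by the double centralizer theorem (or a direct computation in a standard basis $1,r_1,\jmath,r_1\jmath$ with $\jmath r_1=-r_1\jmath$) its centralizer equals itself. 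Applying this to $M=\iota(\mu)$ yields $x,a,b,e,f\in\Q(r_1)$, and since every $\alpha\in K$ commutes with $2\zeta_3+1$ in~$K$, the same argument gives that every entry of $\iota(\alpha)$ lies in $\Q(r_1)$. The only mildly nontrivial ingredient is the centralizer computation in the quaternion case, but this is standard.
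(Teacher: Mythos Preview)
Your proof is correct and follows essentially the same approach as the paper: commute the two explicit matrices, read off $u=0$, $s=r_1$, $t=0$, $v=r_1$ from the appropriate entries, deduce that $\iota(2\zeta_3+1)=r_1\identitymatrix{3}$, and conclude that every entry of a commuting matrix lies in the centralizer $\Q(r_1)$. Your centralizer discussion (ordinary vs.\ supersingular, double centralizer) is a touch more explicit than the paper's one-line ``as $r_1\notin\Q$'', and your final step (directly using that each $\alpha\in K$ commutes with $2\zeta_3+1$) is marginally cleaner than the paper's ``$\mu$ and $2\zeta_3+1$ generate $K$'', but these are cosmetic differences.
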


\begin{proof}
As the matrices $\iota(\mu)$ and $\iota(2\zeta_3+1)$
commute, we have 
$$
\left(\begin{array}{ccc}r_1x & r_1a & r_1b \\ s & t & se+tf \\ u & v & ue+vf\end{array}\right) = \left(\begin{array}{ccc}xr_1 & as+bu & at+bv \\ r_1 & eu & ev \\ 0 & s + fu & t+fv\end{array}\right).
		$$
		We immediately read off $s=r_1$ and
		$u=0$.
		And once we use $u=0$,
		we also get
		$t=0$ and $v=s$.
		Now $\iota(2\zeta_3+1)$ is $r_1$ times the identity matrix,
		hence the fact that the two matrices commute implies that
		all entries of the matrices commute with $r_1$.
		
		As $r_1$ is not in $\Q$, this implies that these
		entries are in the quadratic field $\Q(r_1)$.
		
		Finally, as $\mu$ and $2\zeta_3+1$ generate
		the field $K$, we get that all entries
		of $\iota(\alpha)$ are in~$\Q(r_1)$.
\end{proof}	

In the rest of this section, we express $b$, $e$, and $f$ in terms of
$x$, $a$, and the coefficients of the minimal polynomial of~$\mu$.

	As $\mu$ is cubic integral over $\Z$, we have
\begin{equation}
\mu^3 - t_1 \mu^2 + a_1 \mu - N = 0,
\end{equation}
where $t_1 = \tr_{K_+/\Q}(\mu)$, $N = N_{K_+/\Q}(\mu)$, and $a_1$
are in $\Z$ and depend only on~$\mu$.
\begin{lemma}\label{lem:expressbCD}
	We have
	\begin{equation}
	\begin{aligned}
	f &= t_1 - x,\\
	e &= - (a_1 + x^2 + a - t_1x),\\
	b &= N - (x^3 - t_1x^2 + 2xa + a_1x - t_1a).
	\end{aligned}
	\end{equation}
\end{lemma}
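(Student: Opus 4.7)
The plan is to exploit the fact that $\iota$ is a ring homomorphism, so the matrix $M:=\iota(\mu)$ satisfies the minimal polynomial of $\mu$ over $\Q$:
\[
M^3 - t_1 M^2 + a_1 M - NI_3 \;=\; 0.
\]
By Lemma~\ref{lem:Q(r)}, every entry of $M$ lies in the commutative field $\Q(r_1)$, so standard matrix arithmetic (without worrying about noncommutativity) is available, and I can read off the three desired identities by comparing suitable entries of this cubic matrix equation.

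First, I would compute the first column of $M^2$ directly. Writing $M=\left(\begin{smallmatrix}x&a&b\\1&0&e\\0&1&f\end{smallmatrix}\right)$, one gets
\[
(M^2)_{\bullet,1} \;=\; \bigl(x^2+a,\; x,\; 1\bigr)^{\!T},
\]
and then compute the first column of $M^3$ by multiplying $M$ into this column (using commutativity), obtaining
\[
(M^3)_{\bullet,1} \;=\; \bigl(x^3+2ax+b,\; x^2+a+e,\; x+f\bigr)^{\!T}.
\]

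Next, I would read off the bottom, middle, and top entries of the first column of $M^3 - t_1 M^2 + a_1 M - NI_3 = 0$. The $(3,1)$-entry gives $x+f - t_1 = 0$, hence $f = t_1-x$. The $(2,1)$-entry gives $(x^2+a+e) - t_1 x + a_1 = 0$, hence $e = -(a_1+x^2+a-t_1 x)$. The $(1,1)$-entry gives $(x^3+2ax+b) - t_1(x^2+a) + a_1 x - N = 0$, hence $b = N - (x^3 - t_1 x^2 + 2xa + a_1 x - t_1 a)$.

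There is no real obstacle here: once Lemma~\ref{lem:Q(r)} supplies commutativity of all entries of $M$, the argument is a direct Cayley--Hamilton style computation on a near-companion matrix. The only thing worth double-checking is that the identity $M^3 - t_1 M^2 + a_1 M - NI_3 = 0$ is legitimate as a matrix equation over $\End(A_1^3)\otimes\Q$, which follows because $\iota$ is a ring homomorphism and $\mu$ satisfies this integer polynomial in~$K$.
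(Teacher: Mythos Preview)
Your proof is correct and follows essentially the same approach as the paper: both apply the minimal polynomial of $\mu$ to the matrix $\iota(\mu)$ (using that $\iota$ is a ring homomorphism and that the entries lie in the field $\Q(r_1)$ by Lemma~\ref{lem:Q(r)}), compute the first column of the resulting zero matrix, and read off the three identities entry by entry.
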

\begin{proof}
	As $\iota$ is a ring homomorphism, we
	find that the matrix $M = \iota(\mu)^3 - t_1\iota(\mu)^2 +a_1\iota(\mu) - N \identitymatrix{3}$
	is the zero matrix, where $\identitymatrix{3}$ is the $3\times 3$ identity matrix.
	
	As the entries of $\iota(\mu)$ are given explicitly
	in terms of $x, a, b, e, f$ in a \emph{field} $\Q(r_1)$,
	we can easily compute $M$ in terms of these quantities and $t_1, a_1, N$.
	The leftmost column is exactly
	\begin{equation}
	\left(\begin{array}{r}
	x^{3} -  t_1 x^{2} + \left(2 a + a_{1}\right) x - t_1 a + b -  N \\
	x^{2} -  t_1 x + a + e + a_{1} \\
	x + f -  t_1
	\end{array}\right),
	\end{equation}
	which proves the result.
\end{proof}

\section{Tangent spaces and primitive CM types}\label{sec:tangent}

As in \eqref{eq:conditiononn}, let $n\in\Z_{>0}$ be such that $[n]\ker(F_1)=0$.
In this section, we prove the following proposition, which implies
that in order to bound~$p$, it suffices to find a small~$n$.
In \eqref{eq:explicitn} below, we choose a specific~$n$.

\begin{proposition}\label{prop:pdividesn}
	For $C$ and $p$ as in the hypotheses of
	Theorem~\ref{thm:main}, let $n\in\Z_{>0}$ be 
	such that $[n]\ker(F_1)=0$.
	Then $p\leq 3$ or $p\mid n$.
\end{proposition}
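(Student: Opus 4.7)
The plan is to prove the contrapositive: assuming $p > 3$ and $p \nmid n$, I would derive a contradiction with the primitivity of the CM type of~$J$, by an argument running on exactly the same lines as the corresponding step in \cite{KLLNOS}. The first step is to check that $F = F_0 F_1 : A_1^3 \to \Jbar$ is étale at $\mathfrak{p}$: the kernel of~$F_1$ is annihilated by~$n$ by the choice of~$n$, and the kernel of~$F_0$ is annihilated by $d \in \{1,2\}$ by Proposition~\ref{prop:decomp}(b), so since $p > 3 \geq d$ and $p \nmid n$, both kernels have order coprime to~$p$. Hence $F$ is étale at $\mathfrak{p}$ and induces an isomorphism $F^{\ast} : T_0 \Jbar \stackrel{\sim}{\to} T_0(A_1^3)$ on tangent spaces at the identity.

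Second, I would pin down how $\mu$ acts on each side. Since $J$ has CM of type~$\Phi$ in characteristic zero, $T_0 J \otimes_L \overline{L}$ is isomorphic as a $K$-module to $\bigoplus_{\phi \in \Phi} \overline{L}_\phi$; in particular, $\mu \in K_+$ acts with characteristic polynomial $\prod_{\phi \in \Phi}(T - \phi(\mu))$. Because $\Phi$ selects one embedding of~$K$ from each complex-conjugate pair and $K = K_+(\zeta_3)$, the restriction $\Phi|_{K_+}$ is the full set of real embeddings of~$K_+$, so this characteristic polynomial coincides with the minimal polynomial $f_\mu(T)$ of~$\mu$ over~$\Q$, a separable cubic in $\Z[T]$. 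Reducing modulo~$\mathfrak{p}$ and transporting across the isomorphism~$F^{\ast}$, the same polynomial must arise as the characteristic polynomial of $\iota(\mu)$ acting on $T_0(A_1^3)$, where by Lemmas~\ref{lem:matrix} and~\ref{lem:Q(r)} this matrix has a rigid shape with entries in the quadratic field~$\Q(r_1)$.

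The third and most delicate step is to confront the shape of $\iota(\mu)$ in Lemma~\ref{lem:matrix}(2), together with the relations of Lemma~\ref{lem:expressbCD}, with the requirement that its characteristic polynomial (after projecting the entries along the natural action $\mathcal{R} \to \End(T_0 A_1)$ on the one-dimensional Lie algebra of~$A_1$) coincides with~$f_\mu(T)$; primitivity of the CM type is what rules out the degenerate possibilities and yields the contradiction. This final comparison is the main obstacle in principle, but since the calculation is literally the one carried out in \cite{KLLNOS}, the proof here can simply cite that work without rerunning the computation.
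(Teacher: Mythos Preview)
Your argument has a genuine gap: it uses the wrong endomorphism. You work with the totally real element~$\mu$, but $\mu$ cannot produce a contradiction via the tangent space. As you yourself observe, $\Phi|_{K_+}$ is \emph{always} the full set of real embeddings of~$K_+$, regardless of whether $\Phi$ is primitive. Hence the characteristic polynomial of $\mu$ on $T_0 J$ is $f_\mu$ no matter what. On the other side, $\iota(\mu)$ is a $3\times 3$ matrix over the field~$\Q(r_1)$ annihilated by~$f_\mu$; since $f_\mu$ is irreducible of degree~$3$ over~$\Q(r_1)$ (the extensions $K_+/\Q$ and $\Q(\zeta_3)/\Q$ have coprime degrees), $f_\mu$ is also the characteristic polynomial of~$\iota(\mu)$. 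Both sides therefore agree, and primitivity never enters. There is nothing to ``confront'' in your third step, and the calculation you cite from \cite{KLLNOS} is not for a totally real element.

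The paper's proof uses $2\zeta_3+1$ instead. The crucial input, coming ultimately from Proposition~\ref{prop:decomp}(c), is Lemma~\ref{lem:Q(r)}: $\iota(2\zeta_3+1) = r_1\cdot\identitymatrix{3}$ is a \emph{scalar} matrix, so its action on $T_0(A_1^3)$ has a single eigenvalue. By contrast, primitivity of~$\Phi$ is exactly the statement that $\Phi$ is not induced from~$\Q(\zeta_3)$, i.e.\ that both values $\pm\sqrt{-3}$ occur among the $\phi(2\zeta_3+1)$; since $p\nmid 6$ these reductions are distinct. Transporting via the \'etale isogeny~$F$ (this is where $p\nmid 2n$ is used) gives two distinct eigenvalues on $T_0(A_1^3)$, contradicting scalarity. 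This is the argument of \cite[Proposition~5.8]{KLLNOS} with $\sqrt{-\delta}=2\zeta_3+1$, simplified because Lemma~\ref{lem:Q(r)} replaces the ``small elements commute'' step.
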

\begin{proof}
	Suppose $p\nmid 6n$.
	We claim that primitivity of the CM type
	implies that the matrix $\iota(2\zeta_3+1)$
	has two distinct eigenvalues.
	
	Note that having two distinct eigenvalues contradicts the first statement
	of Lemma~\ref{lem:Q(r)}, which was the equality
	$\iota(2\zeta_3+1) = r_1 \identitymatrix{3}$.
   In particular, the result follows once we prove the claim.
	
	The idea behind the claim is as follows.
	Note that 
	primitivity of the CM type implies that the action
	of $2\zeta_3+1$ on the tangent space of $J$ has
	two distinct eigenvalues.
	If $p$ does not divide $6n$, then these two eigenvalues
	induce distinct eigenvalues for the action on the tangent
	space of $\Jbar$ via $F$ and $[2n]F^{-1}$.
	This proves the claim.
	
	In more detail,
	the proof of the claim is the
	same as the proof of Proposition 5.8 of
	Kılıçer-Lauter-Lorenzo-Newton-Ozman-Streng~\cite{KLLNOS}
	with $\delta=3$ and $\sqrt{-\delta} = 2\zeta_3+1$,
	except for the following changes:
	\begin{enumerate}[label=(\alph*)]
		\item We use $F$ as above and we use $A_1$ instead of $E$ and
		$2n$ instead of $n$.
		Let $G = [2n]F^{-1}$.
		\item
	Instead of \cite[Proposition 4.1]{KLLNOS}, use
	Lemma~\ref{lem:Q(r)}, so in particular the condition
	$p>\frac{1}{8}B^{10}$ is not needed.
	\item The reductions of $\pm \sqrt{-\delta}$
	are distinct as we have $p\nmid 2\delta=6$. This
	also does not need any additional bounds on $p$.
	\item The invertibility of $2n$ modulo $p$ follows from
	our assumption $p\nmid 6n$ and also does not need
	additional bounds on~$p$.\qedhere
	\end{enumerate}
\end{proof}

	\section{Using the polarization to get bounds}
	\label{sec:endofproof}
	
By Proposition~\ref{prop:pdividesn}, it now suffices to find a
sufficiently well-bounded $n\in\Z_{>0}$ with $[n]\ker(F_1)=0$.
In this section, we do exactly this, using the polarization that $C$ induces on $A_1^3$ via~$F_1$.
The key here is that we constructed $F_1$ very explicitly, and that polarizations give rise to positive definite
matrices. Compared to \cite{KLLNOS}, our matrices are a bit simpler, since
in our situation we are able to prove that the entries are in a field, where \cite{KLLNOS}
needs the bounds in order to prove exactly that. 

Let $\lambda = F^{\vee}\lambda_C F$ 
be the polarization
induced on $A_1^3$ by the polarization $\lambda_C$ of $\Jbar$.
We identify~$A_1$ with its dual via the natural polarization $\lambda_{A_1}$,
which we sometimes leave out from the notation.
Then $\lambda$ can be viewed as an endomorphism of $A_1^3$,
and the following result gives it as a matrix.
\begin{lemma}[{cf.~\cite[{Lemma~4.3}]{KLLNOS}}]
	\label{lem:lambda}
	We have
	\[ \lambda = \left(\begin{array}{ccc}
	m & 0 & 0 \\
	0 & \alpha & \beta\\
	0 & \beta^\vee & \gamma\end{array}\right).\]
	with $m, \alpha, \gamma \in\Z_{>0}$ and
	$\beta\in\mathcal{R}$ with
	$\alpha\gamma - \beta\beta^\vee > 0$.
	Moreover, we have $m\mid 2$.
\end{lemma}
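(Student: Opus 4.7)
The plan is to compute $\lambda = F_1^{\vee} F_0^{\vee} \lambda_C F_0 F_1$ in two stages, first pulling back by $F_0=(I_1\ I_2):A_1\times A_2\rightarrow\overline{J}$ and then by $F_1:A_1^3\rightarrow A_1\times A_2$. Both maps have transparent block structure, so the computation amounts to bookkeeping once a few vanishings are established.

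First I would show that $F_0^{\vee}\lambda_C F_0$ is block diagonal, that is, that the off-diagonal entries $I_i^{\vee}\lambda_C I_j$ (with $i\neq j$) vanish. Using $s_jI_j=[d]$ from Proposition~\ref{prop:decomp}(a), write
\[I_i^{\vee}\lambda_C I_j=\tfrac{1}{d}\,I_i^{\vee}\lambda_C I_j s_j I_j=\tfrac{1}{d}\,I_i^{\vee}\lambda_C e_j I_j.\]
The Rosati-invariance $e_j^{\dagger}=e_j$ gives $\lambda_C e_j=e_j^{\vee}\lambda_C=s_j^{\vee}I_j^{\vee}\lambda_C$, so the expression becomes $\tfrac{1}{d}(I_j s_jI_i)^{\vee}\lambda_C I_j$, which vanishes because $s_jI_i=0$ for $i\neq j$. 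Hence $F_0^{\vee}\lambda_C F_0=\operatorname{diag}(M_1,M_2)$ with $M_i=I_i^{\vee}\lambda_C I_i$. Conjugating by $F_1=\left(\begin{smallmatrix}1 & 0 & 0\\ 0 & z & wz\end{smallmatrix}\right)$ then produces the claimed shape: the $(1,1)$-entry is $M_1$, the entire first row and first column are otherwise zero, and the bottom-right $2\times 2$ block has entries $z^{\vee}M_2 z$, $z^{\vee}M_2 wz$, $z^{\vee}w^{\vee}M_2 z$, and $z^{\vee}w^{\vee}M_2 wz$. Since $M_2^{\vee}=M_2$, the $(3,2)$-entry is the Rosati-dual of the $(2,3)$-entry, matching the $\beta,\beta^{\vee}$ notation in the statement.

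Next, I would bound $m$. In the singular cases~(1) and~(2), we have $d=1$ and the decomposition $\overline{J}=A_1\times A_2$ is an equality of principally polarized abelian varieties, so $M_1=\lambda_{A_1}$ and $m=1$. In the smooth case~(3), $I_1=\phi^{*}$ for the degree-$2$ cover $\phi:\overline{C}\to E$ of Section~\ref{sec:cover}, and Mumford's formula $(\phi^{*})^{\vee}=\lambda_E\phi_{*}\lambda_C^{-1}$ (recorded in the proof there) yields
\[M_1=I_1^{\vee}\lambda_C I_1=\lambda_E\phi_{*}\phi^{*}=2\lambda_E,\]
so after identifying $A_1^{\vee}$ with $A_1$ via $\lambda_{A_1}=\lambda_E$ we obtain $m=2$. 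In all three cases $m\mid 2$.

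For the positivity and integrality statements, I would restrict the polarization $\lambda$ to the various copies of $A_1$ inside $A_1^{3}$. Restriction to the second (resp.~third) diagonal factor is itself a polarization on $A_1$, so $\alpha$ and $\gamma$ are Rosati-invariant positive elements of $\mathcal{R}=\End(A_1)$; since $A_1$ is an elliptic curve, the Rosati involution is the canonical involution on the (imaginary quadratic or quaternion) order $\mathcal{R}$, whose fixed subring is $\Z$, so $\alpha,\gamma\in\Z_{>0}$. Positive definiteness of the bottom $2\times 2$ block (again by the fact that it is itself a polarization on $A_1\times A_1$) gives the determinant inequality $\alpha\gamma-\beta\beta^{\vee}>0$. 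The main obstacle I expect is correctly computing $m$ in case~(3): everything else is formal once the block structures are in place, but the identification $M_1=2\lambda_E$ must be done carefully via Mumford's formula, with attention paid to which principal polarization on $A_1$ is being used as the identification $A_1\cong A_1^{\vee}$.
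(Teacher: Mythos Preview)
Your argument follows the paper's closely: the block-diagonal shape of $F_0^\vee\lambda_C F_0$ via Rosati-invariance of the~$e_i$, the subsequent conjugation by~$F_1$, and the positivity from the fact that $\lambda$ is a polarization are all handled the same way (the paper cites Mumford's Application~III for positive definiteness rather than restricting to sub-polarizations, but the content is identical).

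The one substantive difference is the bound $m\mid 2$. The paper gives a uniform argument: from $e_1^\dagger=e_1$ and $s_1I_1=[d]$ one deduces $\lambda_C I_1[d]=s_1^\vee I_1^\vee\lambda_C I_1$, whence $\ker(I_1^\vee\lambda_C I_1)\subseteq A_1[d]$ and thus $m\mid d\mid 2$. Your case-by-case computation is more explicit (it even pins down $m=d$), but in case~(3) you assert $I_1=\phi^*$, which presupposes that $\phi^*:E\to\overline{J}$ is injective. The paper only defines $A_1=\operatorname{im}(e_1)=\operatorname{im}(\phi^*)$, so a priori $\phi^*$ factors as $I_1\circ g$ for some isogeny $g:E\to A_1$. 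Your computation $(\phi^*)^\vee\lambda_C\phi^*=2\lambda_E$ then yields $m\cdot\deg(g)=2$, hence $m\mid 2$ regardless---so the gap is easily closed without ever settling whether $g$ is an isomorphism. (It is: the degree-$2$ cover $\phi$ is ramified, so $\phi^*$ is injective; but you should say so or bypass the issue as above.)
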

\begin{proof}
	Recall from just above the statement of the lemma that $\lambda$ is defined as a homomorphism $A_1^3\rightarrow (A_1^\vee)^3$
	by $\lambda =F^\vee\lambda_C F$, and as an endomorphism of $A_1^3$
	by $\lambda = (\lambda_{A_1}^{-1}\times \lambda_{A_1}^{-1}\times \lambda_{A_1}^{-1}) F^\vee\lambda_C F$.
	The symmetry of $\lambda$
	now follows from the symmetry of $\lambda_C$, which is Mumford
	\cite[(3) on page 190]{Mumford}.
	
	We now prove that the off-diagonal entries of the first row
	and column of $\lambda$ are zero. Since~$F=F_0 F_1$, we write
\begin{equation}
\lambda = 
\mathrm{diag}(\lambda_{A_1}^{-1}
,\lambda_{A_1}^{-1},\lambda_{A_1}^{-1})
F_1^{\vee} (I_1\ I_2)^{\vee} \lambda_C (I_1\ I_2) F_1,
\qquad\mbox{where}\qquad
F_1^{\vee}=\homEAtoEEE{1}{0}{0}{z^\vee}{0}{\!\! z^\vee w^\vee \!\!}.
\end{equation}
To see that four entries are zero, we only look at the off-diagonal entries
of the first row. This suffices by symmetry.
By Proposition~\ref{prop:decomp}(a), we get
$e_1^\vee\lambda_C e_2^{\vphantom{\vee}}
= \lambda_Ce_1^\dagger e_2^{\vphantom{\dagger}}
= \lambda_C e_1^{\vphantom{\vee}} e_2^{\vphantom{\vee}} = 0$.
As~$e_i = I_is_i$ and
$s_i$ is surjective
we get $I_1^\vee \lambda_C I_2^{\vphantom{\vee}}=0$.
Therefore we have \[(I_1\ I_2)^{\vee} \lambda_C (I_1\ I_2) = \EndEA{*}{0}{*}{*},\]
and hence the off-diagonal entries of the first row of $\lambda$ are zero.

From the final paragraph of Application III on page 210 of Mumford~\cite{Mumford},
we get that $\lambda$ is positive definite, hence $m, \alpha, \gamma, \alpha\gamma-\beta\beta^\vee > 0$.

It remains only to prove $m\mid 2$.
We have $m = I_1^\vee \lambda_C I_1^{\vphantom{\vee}}$ since we defined $m$
to be the first diagonal entry of $(I_1\ I_2)^{\vee} \lambda_C (I_1\ I_2)$.

Recall that by Proposition \ref{prop:decomp}(a) we have
$e_1^{\vphantom{\dag}}=e_1^{\dag}$.
This implies $e_1=\lambda_C^{-1}e_1^{\vee}\lambda_C^{\vphantom{-1}}$
and by $e_1=I_1s_1$, we get $I_1s_1=\lambda_C^{-1}s_1^{\vee}I_1^{\vee}\lambda_C$.
Therefore, we have
$\lambda_CI_1s_1I_1=s_1^{\vee}I_1^{\vee}\lambda_CI_1^{\vphantom{\vee}}$,
hence $\lambda_C I_1[d]=s_1^{\vee}I_1^{\vee}\lambda_CI_1^{\vphantom{\vee}}$.
Since $\lambda_C$ is an isomorphism and $I_1$ is injective, we get that
$\ker(s_1^{\vee}I_1^{\vee}\lambda_CI_1)= A_1[d]$.
Hence, $\ker(I_1^{\vee}\lambda_CI_1)\subseteq A_1[d]$,
and we know that $m=I_1^{\vee}\lambda_CI_1$
is a positive integer. So we finally get $m=1$ or~$2$.
\end{proof}

Since $\mu\in K_+$, it equals its
complex conjugate~$\overline{\mu}$.
Moreover
(analogously to Proposition 4.8 of~\cite{BCLLMNO}),
we have for every $\eta\in K$,
\begin{equation*}
\lambda^{-1}\iota(\eta)^\vee\lambda
= (F^\vee\lambda_C F)^{-1}(F^{-1} \eta F)^\vee F^\vee \lambda_C F
= F^{-1}\lambda_C^{-1}\eta^\vee \lambda_C F = \iota(\eta^\dagger) = \iota(\overline{\eta}),
\end{equation*}
hence $\iota(\mu)^\vee \lambda = \lambda\iota(\mu)$, so
\begin{equation}\label{eq:conjugate}
\left(\begin{array}{ccc} mx^{\vee} & \alpha & \beta \\
m a^{\vee} &\beta^{\vee}&\gamma\\
mb^{\vee} & e^{\vee}\alpha+f^{\vee}\beta^{\vee}&
e^{\vee}\beta+f^{\vee}\gamma\end{array}\right)
=
\left(\begin{array}{ccc}m x  & ma & mb\\
\alpha & \beta & \alpha e+\beta f\\
\beta^{\vee}& \gamma & \beta^{\vee}e+\gamma f\end{array}\right).
\end{equation}
This tells us
\begin{equation}\label{alpha eq ma}
\begin{aligned}
\alpha &= ma, & & \mbox{hence $a\in \Q_{>0}\cap \mathcal{R} = \Z_{>0}$}; \\ 
\beta^\vee = \beta &= mb;\\
x &= x^\vee, & & \mbox{hence $x\in\Z$};\\
\gamma &= \alpha e + \beta f.
\end{aligned}
\end{equation}
Combining this with Lemma~\ref{lem:expressbCD},
we find explicit expressions for all entries of $\iota(\mu)$ and $\lambda$
in terms of $x$, $a$, $m$, and the coefficients of the minimal
polynomial of~$\mu$.
In particular, these entries are all in~$\Z$.

Let
\begin{equation}\label{eq:explicitn}
n = \alpha\gamma-\beta\beta^\vee = m(a\gamma-mb^2)\in m\Z.
\end{equation}
Then Lemma~\ref{lem:lambda} and the definition of $\lambda$
give
\[\left(\begin{array}{ccc} n/m& 0&0\\0& \gamma & -\beta \\
0 & -\beta^\vee & \alpha\end{array}\right)F^{\vee}\lambda_C
F = [n],\]
so that in particular the condition
$[n]\ker(F)=0$ from \eqref{eq:conditiononn} is satisfied.
We have already expressed $\alpha$, $\gamma$, and $\beta$
in terms of $x$, $a$, $m$, and the coefficients of the
minimal polynomial of~$\mu$. As $m$ is $1$ or~$2$, it suffices to  
bound $x$ and $a$
in order to bound~$n$.

As the $3\times 3$ matrix $\iota(\mu^2)$ over $\Q$
satisfies the (cubic) minimal polynomial of $\mu^2$ over $\Q$,
we find that its (matrix) trace is the trace of $\mu^2$ from $K_+$ to $\Q$,
which is
$t_2 := t_1^2-2a_1$.
We get
\begin{align}
t_2 &= x^2 + 2a + 2e + f^2 \nonumber\\
         &= x^2 + 2a + \frac{2}{\alpha} \gamma - \frac{2}{\alpha} \beta f + f^2 \nonumber\\ 
         &= x^2 + 2a + \frac{2}{\alpha} \gamma - (\frac{\beta}{\alpha})^2 + (\frac{\beta}{\alpha}-f)^2 \nonumber \\ \label{ineq with n}
        &= x^2 + 2a + \frac{\gamma}{\alpha} + \frac{n}{\alpha^2} + (\frac{\beta}{\alpha}-f)^2 \\ 
         &\geq x^2 + 2a. \nonumber
\end{align}

In particular, we get 
\begin{equation}\label{eq:boundax}
\begin{aligned}
|x| &\leq \sqrt{t_2}\quad\mbox{and}\\ 
0 < a &\leq \frac{1}{2}(t_2-x^2).
\end{aligned}
\end{equation}
Moreover, by \eqref{ineq with n}, we get $n \leq t_2 \alpha^2$ and $2a \leq t_2$.
Then by (\ref{alpha eq ma}), we obtain $n \leq t_2\alpha^2 \leq t_2 m^2a^2\leq t_2^3$ as~$m|2$.
By Proposition~\ref{prop:pdividesn}, we have $p\leq 3$ or
$p\mid n$.
Hence we get the bound 
$p\leq \text{max}\{3,\,t_2^3\}$.

\begin{lemma}\label{lem:traceatleast2}
	Let $\mu$ be a totally real cubic algebraic integer, and let $t_2$ be the trace
	of $\mu^2$. Then we have $t_2\geq 2$.
\end{lemma}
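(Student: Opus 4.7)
The plan is to write $t_2 = t_1^2 - 2a_1$ in terms of the trace $t_1 = \tr_{K_+/\Q}(\mu) \in \Z$ and $a_1 = \sum_{i<j}\mu_i\mu_j \in \Z$, where $\mu_1,\mu_2,\mu_3 \in \R$ are the (pairwise distinct) conjugates of $\mu$. I will argue $t_2 \geq 2$ in three stages: first show $t_2 \geq 1$ via a sum-of-squares identity, then rule out $t_2 = 1$ using integrality together with a discriminant/critical-point analysis of the minimal polynomial.

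First I would record the Newton/power-sum identity
\begin{equation*}
3t_2 - t_1^2 \;=\; \sum_{1\leq i<j\leq 3}(\mu_i - \mu_j)^2.
\end{equation*}
Since the $\mu_i$ are distinct reals, the right-hand side is strictly positive, so $t_2 > t_1^2/3 \geq 0$, and because $t_2 \in \Z$ this gives $t_2 \geq 1$.

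Next, suppose for contradiction that $t_2 = 1$. Then $t_1^2 < 3$, so $t_1 \in \{-1,0,1\}$. Reducing $t_2 = t_1^2 - 2a_1$ modulo $2$ shows that $t_1^2$ must be odd, ruling out $t_1 = 0$; thus $t_1 = \pm 1$ and $a_1 = 0$. The minimal polynomial of $\mu$ therefore has the form
\begin{equation*}
f(x) \;=\; x^3 - t_1 x^2 - N, \qquad N \in \Z.
\end{equation*}
The critical points of $f$ are $x=0$ and $x=2t_1/3$, with values $f(0) = -N$ and $f(2t_1/3) = -4t_1^3/27 - N$. For $f$ to have three distinct real roots these two values must have opposite signs, i.e.\ $N(N + 4t_1^3/27) < 0$. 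Since $t_1 = \pm 1$, this forces $N$ to lie strictly between $0$ and $\mp 4/27$, which admits no integer, contradicting $N \in \Z$.

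The step I expect to be the mildly tricky one is the last one: just Cauchy--Schwarz plus integrality is not quite enough to rule out $t_2 = 1$ (the residual cases $t_1 = \pm 1$, $a_1 = 0$ survive), so one really has to use that $\mu$ is totally real, which is what the critical-point/discriminant check supplies. Everything else is bookkeeping with elementary symmetric functions.
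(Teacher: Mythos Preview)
Your argument is correct, but it takes a noticeably longer route than the paper's. The paper observes directly that $t_2=\mu_1^2+\mu_2^2+\mu_3^2$; if $t_2\leq 1$ then, since each $\mu_i^2>0$, we get $|\mu_i|<1$ for all~$i$, hence $0<|N_{K_+/\Q}(\mu)|=|\mu_1\mu_2\mu_3|<1$, contradicting $N_{K_+/\Q}(\mu)\in\Z$. That is the whole proof. Your approach instead pins down $t_1$ and $a_1$ from $t_2=1$ and then uses a discriminant/critical-point analysis of the resulting one-parameter family $x^3\mp x^2-N$ to rule out integer~$N$. This is valid, and it is a nice illustration of how integrality constraints interact with total reality, but it is more machinery than needed: the norm argument handles the case $t_2=1$ (and indeed $t_2\leq 1$) in one line without ever writing down the minimal polynomial.
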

\begin{proof}
	Let $a$, $b$, $c$ be the images of $\mu$ under the three embeddings into $\R$.
	Then $t_2 = a^2 + b^2 + c^2\in\Z$.
	Suppose $t_2 < 2$. Then $t_2 \leq 1$ and $a^2$, $b^2$, $c^2>0$, hence $|a|$, $|b|$, $|c|\in (0,1)$, so we get $|abc|\in(0,1)$.
	On the other hand, we have $|abc| = |N(\mu)|\in \Z$. Contradiction.
\end{proof}
\begin{proof}[Proof of the first inequality in Theorem~\ref{thm:main}]
As stated above Lemma~\ref{lem:traceatleast2}, we have proven
the inequality $p\leq \text{max}\{3,\,t_2^3\}$
under the hypotheses of Theorem~\ref{thm:main}.
As Lemma~\ref{lem:traceatleast2} gives $t_2^3\geq 2^3>3,$
we get $p\leq t_2^3$.
\end{proof}

\section{Intrinsic bounds from geometry of numbers}\label{sec:geometryofnumbers}

At the end of Section~\ref{sec:endofproof}, we finished
the proof of the first inequality in Theorem~\ref{thm:main}:
$p\leq \tr_{K^+/\Q}(\mu^2)^3$.
Next, we show that there exists an element $\mu$
for which this right hand side is explicitly
bounded in terms of the discriminant
of~$K^+$,
and hence we prove the rest of Theorem~\ref{thm:main}.

Let $\{\sigma_1$, $\sigma_2$, $\sigma_3\}$ be the set of the real embeddings of $K_+$.
This gives us the map $\sigma : K_+ \rightarrow \R^3$ by sending $y$ to $(\sigma_i(y))_i$.
The order
$\Z+2\O_+\subset K_+$ is a lattice of co-volume $2^{3-1}|\Delta(\O_+)|^{1/2}$
in $\R^3$.
Let $R=4\pi^{-1/2}|\Delta(\O_+)|^{1/4}+\epsilon$ for some $\epsilon>0$.

We choose a symmetric convex body in $\R^3$:
$$
\calC_R = \{x\in \R^3: |x_1| < 1, x_2^2+x_3^2<R^2\}.
$$

We then have $\text{vol}(\calC_R) = 2\pi R^2>32|\Delta(\O_+)|^{1/2}
= 2^3\text{covol}(\Z+2\O_+)$.
 By Minkowski's first convex body theorem (see Siegel~\cite[Theorem 10]{Siegel}),
there is a non-zero $\mu\in(\Z+2\O_+)\cap \calC_R$.
Note that $\mu$ generates $K_+$:
if $\mu\in \Q$ then $\mu\in\Z$, but $|\mu|< 1$,
so we get $\mu=0$ which is a contradiction.

Then we get 
$$
\tr_{K_+/\Q}(\mu^2)= \sum_i\sigma_i(\mu^2) \leq (1+R^2).
$$
Since $\mu$ is an algebraic integer in~$K_+$, we have $\tr_{K_+/\Q}(\mu^2)\in\Z$. So when
we let $\epsilon$ tend to $0$, we get $t_2 = \tr_{K_+/\Q}(\mu^2)\leq (1+\frac{16}{\pi}|\Delta(\O_+)|^{1/2})$.

Since $p\leq t_2^3$ and $|\Delta(\O_+)|\geq 2$, we get
$p\leq (1+\frac{16}{\pi}|\Delta(\O_+)|^{1/2})^{3}< 196|\Delta(\O_+)|^{3/2}$.
This finishes the proof of Theorem~\ref{thm:main}.\qed

\section{Computing the set of primes}\label{sec:setofprimes}\label{sec:conjecture}

From the proof of Theorem~\ref{thm:main},
we get much more than just a bound on~$p$
as follows.
Take a totally real
$\mu\in \Z+2\mathcal{O}$. Then list all $a$ and $x$
satisfying the bounds of \eqref{eq:boundax}
and all $m\in\{1,2\}$.
For each, compute $n = n(\mu, a, x)$
using \eqref{alpha eq ma} and~\eqref{eq:explicitn}.
Then let $N_{\mu}$ be the product of the numbers $n(\mu, a, x)$.
Then $p$ divides $6N_\mu$ by Proposition~\ref{prop:pdividesn}.
This is already much better than just a bound on~$p$.

However, we can do even better.
For each $\mu, a, x, m$, we get a $\Q$-algebra homomorphism
$\iota : K\rightarrow\Mat_{3\times 3}(\Q(\zeta_3))$
(given on generators in Lemma~\ref{lem:matrix},
coefficients in $\Q(\zeta_3)$ by Lemma~\ref{lem:Q(r)}
and $r_1^2=-3$).
We know by Lemmas \ref{lem:matrix} and~\ref{lem:Q(r)}
that all elements of the image of $\Z+2\mathcal{O}$ are
matrices with entries in $\frac{1}{n}\Z[\zeta_3]$, with the
entries of the top row in $\Z[\zeta_3]$.
So we compute a $\Z$-basis of $\Z+2\mathcal{O}$ and throw away
all triples $(x,a,m)$ for which an element of this basis maps to a matrix
that does not satisfy the integrality condition.
We also throw away all triples $(x,a,n)$ for which one of $\alpha,\beta,\gamma$
is non-integral or for which $\gamma$ or $n$ is non-positive.
By making the set of pairs $(x,a)$ smaller in this way,
the product $N_{\mu}$ of the numbers $n(\mu, a, x)$ becomes much smaller.

We implemented the computation of $N_{\mu}$ in SageMath~\cite{sage}
and made the implementation available at~\cite{ourcode}.

\begin{theorem}\label{thm:main2}
	Let $C$ be a Picard curve of genus $3$ 
	over a number field $L$ and suppose that the
	endomorphism ring $\End(J_{\Lbar})$ of $J = \Jac(C)$
	over the algebraic closure is isomorphic to
	an order $\O$ of a number field $K$ of degree~$6$.
	
	Let $K_+$
	be the real cubic subfield of $K$ and $\O_+ = K_+ \cap \O$. 
	Let $\mu$ be a totally real element in~$\Z+2\O_+$ such that 
	$K = \Q(\mu)(\zeta_3)$.
	
	Let $j = u/b^\exponentofinvariant$ be an absolute Picard curve invariant.
	Let $\mathfrak{p}$ be a prime of $\O_L$ lying over a rational prime~$p$.
	If $\mathrm{ord}_{\mathfrak{p}}(j(C)) < 0$,
	then $p$ divides the number $6N_{\mu}$ for $N_{\mu}$ as
	described in the preceding paragraphs.\qed
\end{theorem}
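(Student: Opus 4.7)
The plan is to assemble the pieces already established: Proposition~\ref{prop:pdividesn} reduces the statement to exhibiting an explicit integer $n$ annihilating $\ker(F_1)$ with $p\mid n$, and \eqref{eq:explicitn} together with the constraints in Sections~\ref{sec:decomp}--\ref{sec:endofproof} pin down this $n$ as one element of a computable finite set whose product is $N_\mu$.

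If $p\le 3$, then $p\mid 6$ and we are done. So assume $p>3$. By Proposition~\ref{prop:pdividesn}, any integer $n$ with $[n]\ker(F_1)=0$ is divisible by $p$, and the explicit choice $n = m(a\gamma-mb^2)$ from \eqref{eq:explicitn} satisfies this condition. Hence it suffices to verify that this particular $n$ appears in the finite list of candidate values whose product defines~$N_\mu$.

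For this, I would enumerate all constraints that the triple $(x,a,m)$ associated with $C$ must satisfy. From \eqref{alpha eq ma} one gets $x,a\in\Z$; from \eqref{eq:boundax} one has $|x|\le\sqrt{t_2}$ and $0<a\le(t_2-x^2)/2$; from Lemma~\ref{lem:lambda} one has $m\in\{1,2\}$ with $\alpha,\gamma\in\Z_{>0}$, $\beta\in\mathcal{R}$, and $n>0$. Furthermore, for every element $\eta$ of a chosen $\Z$-basis of $\Z+2\O$, the matrix $\iota(\eta)$ (which, by Lemma~\ref{lem:Q(r)}, lies in $\Mat_{3\times 3}(\Q(\zeta_3))$ with $r_1^2=-3$, and whose entries are explicit rational functions of $x,a,m$ and the minimal polynomial of $\mu$ through Lemmas~\ref{lem:matrix} and~\ref{lem:expressbCD}) must have entries in $\tfrac{1}{n}\Z[\zeta_3]$ with top row in $\Z[\zeta_3]$, by Lemma~\ref{lem:matrix}. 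Each of these conditions can be checked mechanically from the triple, giving a finite list of candidates.

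The true triple $(x,a,m)$ arising from $C$ satisfies every one of these constraints --- this is exactly the content of Lemmas~\ref{lem:matrix}, \ref{lem:Q(r)}, \ref{lem:expressbCD}, and \ref{lem:lambda} --- so the corresponding value of $n$ appears in the list, whence $n\mid N_\mu$ and therefore $p\mid 6N_\mu$. There is essentially no obstacle: the heavy lifting was carried out in Sections~\ref{sec:reduction}--\ref{sec:endofproof}; the only care required here is to verify that the filtering procedure described above the theorem never excludes the true triple, which is automatic from the cited lemmas.
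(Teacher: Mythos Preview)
Your proposal is correct and follows exactly the argument the paper intends: the theorem is stated with a \qed because the proof is precisely the assembly you describe, namely Proposition~\ref{prop:pdividesn} together with the observation that the true triple $(x,a,m)$ survives every filter in the description of $N_\mu$ by Lemmas~\ref{lem:matrix}, \ref{lem:Q(r)}, \ref{lem:expressbCD}, \ref{lem:lambda} and~\eqref{eq:boundax}. Your write-up is simply a more explicit spelling-out of the paragraphs preceding the theorem.
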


\begin{conjecture}\label{conj:valuations}
	There are constants $s, e\in\Q_{>0}$ such that the following holds.
	Let $j=u/b^\exponentofinvariant$ be an absolute Picard curve invariant.
		
	Let $\mathcal{O}$ be an order in a sextic CM field. Let $\mathrm{CM}_K$ be the set of
	isomorphism classes of Picard
	curves $C$ over $\Qbar$ of genus $3$ with $\End(J_{\Qbar})$
	isomorphic to~$\mathcal{O}$.
	Let $N_{\mu}$ be as in Theorem~\ref{thm:main2}.
	
	Then for all non-archimedean valuations $v$ of $\Qbar$, we have
	\begin{equation}\label{eq:valuations}
	\sum_{C\in \mathrm{CM}_K} \max\{0,v(j(C))\} \leq \exponentofinvariant(v(s) + ev(N_{\mu})).
	\end{equation}
	\end{conjecture}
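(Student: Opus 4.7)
The plan is to upgrade Theorem~\ref{thm:main2} from a statement about \emph{which} primes can divide denominators to a statement about \emph{multiplicities}. First I would fix a non-archimedean valuation $v$ lying over a rational prime $p$ and reduce to bounding the contribution from a single $p$. By Theorem~\ref{thm:main2} the sum in \eqref{eq:valuations} is supported on primes dividing $6 s N_\mu$, so it suffices to bound the sum $\sum_C v(j(C))$ (with the appropriate sign convention) in terms of $v(N_\mu)$ for each such prime. For every $C \in \mathrm{CM}_K$ that contributes to the sum, the proof of Theorem~\ref{thm:main2} produces, together with the decomposition of Proposition~\ref{prop:decomp}, data $(x, a, m)$ constrained by Lemmas~\ref{lem:matrix}--\ref{lem:expressbCD}. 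Let $S_\mathfrak{p}$ denote the explicit finite set of such admissible triples that are enumerated in the paragraphs above Theorem~\ref{thm:main2}. Partitioning $\mathrm{CM}_K$ by the element of $S_\mathfrak{p}$ to which $C$ gives rise reduces the conjecture to an upper bound on each stratum.

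The central step would be to relate $|v(j(C))|$ to a local \emph{deformation depth} of $C$ at $\mathfrak{p}$. Since the invariants $j_1, j_2, j_3$ all have a power of $b$ as denominator, and the reductions of Lemma~\ref{lem:cases} are exactly those with $b \equiv 0$ modulo $\mathfrak{p}$, one expects $-v(b(C))$ to measure how far the $\mathfrak{p}$-adic point of the moduli space of Picard curves sits from the locus $\{b = 0\}$ of products. I would make this precise by working with Serre--Tate coordinates on the formal neighborhood of $A_1 \times A_2$ inside the unitary Shimura variety of signature $(2,1)$ parametrizing Picard curves with $\zeta_3$ in the endomorphism ring, identifying $|v(j(C))|$ up to a factor bounded by $\ell$ with the $\mathfrak{p}$-adic order of contact between the CM point defined by $C$ and the product stratum indexed by the chosen $(x,a,m) \in S_\mathfrak{p}$.

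Once such an identification is in place, the summation over $C \in \mathrm{CM}_K$ on a fixed stratum of $S_\mathfrak{p}$ becomes an arithmetic intersection number of a zero-dimensional CM cycle with a divisor-type stratum on the Shimura variety. I would then estimate it by counting optimal embeddings of $\mathcal{O}$ into $\End(A_1 \times A_2)$ reducing modulo $\mathfrak{p}^k$ to the embedding prescribed by $(x,a,m)$, and summing over $k$. In parallel with the Goren--Lauter and Bruinier--Yang arguments in genus~$2$, these counts should be linear in $v(N_\mu)$, with a multiplicative constant $e$ that is intrinsic to the formalism and depends only on $\ell$ and on the signature $(2,1)$. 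The fixed prime contribution from $s$ would absorb the wild primes $2, 3$ and the bounded number of strata that are independent of $\mu$.

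The hard part will be the second step. In genus~$2$, the analogous identification is essentially the content of the arithmetic Siegel--Weil theorem on the Hilbert modular surface, whose proof uses substantial input well beyond the quaternionic arithmetic of the present paper. For Picard curves one needs the parallel theory on the unitary Shimura surface for $U(2,1)$, together with uniform control over the local deformation rings at every $\mathfrak{p} \mid N_\mu$ and, crucially, uniformity in the CM field $K$: a naive prime-by-prime analysis would produce a constant $e$ growing with $|\Delta(\mathcal{O}_+)|$, not the absolute constant conjectured. Obtaining a truly uniform $e$ appears to require either recent progress on the Kudla program for unitary groups of signature $(n,1)$, or an explicit $p$-adic uniformization of the moduli of Picard curves near its $A_1 \times A_2$ boundary, and in either case seems to lie well beyond the elementary matrix-of-homomorphisms methods used to prove Theorem~\ref{thm:main}.
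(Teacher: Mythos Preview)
The statement you are attempting to prove is labelled a \emph{Conjecture} in the paper, and the paper does not prove it. There is therefore no proof in the paper to compare your proposal against. What the paper does offer is a single sentence of strategy immediately after the conjecture: ``To prove the conjecture, one would need to retrace our proof, but working over prime-power quotients of $\mathcal{O}_L$ instead of over the field $\mathcal{O}_L/\mathfrak{p}$.'' In other words, the authors expect the bound on exponents to come from running the same matrix-of-homomorphisms argument (Sections~\ref{sec:reduction}--\ref{sec:endofproof}) modulo $\mathfrak{p}^k$ for increasing $k$, in the spirit of Goren--Lauter, rather than from arithmetic intersection theory on a Shimura variety.

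Your proposal is a different, and substantially heavier, plan: you pass to Serre--Tate coordinates on the unitary Shimura surface of signature $(2,1)$, interpret the sum in \eqref{eq:valuations} as an arithmetic intersection number, and appeal to the Kudla program / arithmetic Siegel--Weil for $U(2,1)$. This parallels the Bruinier--Yang route in genus~$2$ rather than the Goren--Lauter route the paper has in mind. Both routes are legitimate research programs, and you are candid that yours is not a proof: the identification of $|v(j(C))|$ with a deformation depth, and the uniformity of $e$ in $K$, are exactly the hard points, and you correctly flag them as open. What the paper's suggested approach buys is that it stays within the elementary machinery already developed (no Shimura varieties, no $p$-divisible-group deformation theory beyond what is implicit in the N\'eron model), at the cost of having to track all the estimates of Sections~\ref{sec:decomp}--\ref{sec:endofproof} modulo $\mathfrak{p}^k$ rather than modulo $\mathfrak{p}$; what your approach would buy, if it can be carried out, is a conceptual explanation and possibly an exact formula rather than an inequality.

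In short: neither you nor the paper proves the conjecture; the paper's suggested line is more elementary and closer to the methods of the paper itself, while yours is more structural but relies on deep input you yourself identify as not yet available in the required uniformity.
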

\begin{remark}\label{rem:cube}
	In fact, the examples in Section~\ref{sec:examples} below suggest
	that when $K/\Q$ is Galois, the constant $e=1/3$ suffices.
	
	The numerology that supports the factor $1/3$ is that
	$K$ has three CM types up to complex conjugation
	that are all equivalent, so that
	every curve should be counted three times,
	but is only counted once in the left hand side of~\eqref{eq:valuations}.
\end{remark}
To prove the conjecture, one would need to retrace our proof, but
working over prime-power quotients of $\mathcal{O}_L$ instead
of over the field $\mathcal{O}_L/\mathfrak{p}$.
Once the conjecture is proven, our implementation of $N_{\mu}$, together
with an interval-arithmetic-version of Lario-Somoza~\cite{LarioSomoza}
would give a proven algorithm for computing CM Picard curves
and Picard class polynomials. In particular, it would prove
the conjectured CM curves of Koike-Weng~\cite{KoikeWeng} and Lario-Somoza~\cite{LarioSomoza}.

\section{Examples}\label{sec:examples}

Finally, we take a few example curves and compare our bounds with previous bounds, and compare our invariants
with previous choices.

Given a Picard curve $C$, let $\den_1$ and $\den_3$ be the denominators
of the absolute invariants $j_1(C)=(a^3/b^2)(C)$ and $j_3(C)= (c^3/b^4)(C)$, respectively.
Then we define the \textit{absolute denominator} $\denabs$ of $C$ by
$$\denabs = \prod_{p|\den_1\cdot \den_3} p^{\max\{{v_p(\den_1)/2}, {v_p(\den_3)/4}\}}
= \prod_{p} p^{{v_p(b) - \frac{1}{4}\min\{6 v_p(a), 4 v_p(b), 3v_p(c)\}}}
.$$ 

Theorem~\ref{thm:main2} tells us that all primes dividing $\denabs^4$ in fact divide $6N_{\mu}$.
In fact, Conjecture~\ref{conj:valuations} implies that
$\denabs$ divides $sN_{\mu}^e$.

We define the \emph{absolute denominator $\denKWabs{}$ of the
Koike-Weng invariants} $j'_1 = b^2/a^3$ and $j'_2 = c/a^2$ in the same way.
In other words, let $\den'_1$ and $\den'_2$ be the denominators of
$j'_1(C)$ and $j'_2(C)$ respectively. Then we define
 
 $$\denKWabs{} = \prod_{p|\den'_1\cdot \den'_2}
  p^{\max\{{v_p(\den'_1)/3},{v_p(\den'_2)/2}\}}.$$ 

Let $\Delta$ be the discriminant invariant~\eqref{eq: disc. inv.}
on page~\pageref{eq: disc. inv.}, which has
weight $12$. We define the invariants
$$
i_1= \frac{a^6}{\Delta},\quad
i_2=\frac{a^3b^2}{\Delta},\quad
i_3=\frac{a^4c^{\vphantom{1}}}{\Delta},\quad
i_4= \frac{b^4}{\Delta},\quad
i_5= \frac{c^3}{\Delta}
$$
denoted $j_*$ in 
Kılıçer-Lauter-Lorenzo-Newton-Ozman-Streng
\cite{KLLNOS}.
Let $\denDeltaabs$ denote the least common multiple
of the denominators of $i_1(C)$, $i_4(C)$ and $i_5(C)$
(equivalently, of all $i_*(C)$).

Let \begin{equation}\label{eq:defB}
B = \min\{\tr_{K_+/\Q}(\alpha\overline{\alpha}): \alpha\in \mathcal{O}_K\setminus\{0\}, \overline{\alpha} = -\alpha \},
\end{equation}
so \cite[Remark~1.6]{KLLNOS} conjectures that primes $p$ of bad reduction
are $< \frac{1}{8} B^{10}$,
and in the case where $K/\Q$ is cyclic and
$C$ has CM by $\mathcal{O}_K$, it follows
from
Kılıçer-Labrande-Lercier-Ritzenthaler-Sijsling-Streng
\cite[Proposition 4.1]{KLLRSS} that $p$
has exactly $1$ or $3$ prime factors in~$\mathcal{O}_K$.
The number of such primes below this bound is roughly
$\frac{1}{16} B^{10} / \log(\frac{1}{8} B^{10}) $
by the prime number theorem and the Chebotarev density theorem.
The product of them will therefore have a number of digits
that is comparable to $B^{10}$ itself.

\vspace{2mm}


\textbf{Example 1.}
For the field $K_+=\Q(\alpha)=\Q[x]/(x^{3} - x^{2} - 2 x + 1)$,
let $K = K_+(\zeta_3)$. Then there 
is exactly one curve with primitive CM by the maximal order $\mathcal{O}_K$ of $K$.
A conjectural model is given in Koike-Weng~\cite[6.1(2)]{KoikeWeng} and Lario-Somoza~\cite[4.1.2]{LarioSomoza}.
Its invariants are
\begin{itemize}
\item  $i_1 = \frac{7^{4}}{2^{6}}$, $i_2 = \frac{-7^{2}}{2^{3}}$, $i_3 = \frac{-7^{3}}{2^{8}}$
\item  $\denDeltaabs{} = 2^{12} \approx 4.1\cdot 10^{3}$
, \quad $\frac{1}{8}B^{10} \approx 7.2\cdot 10^{10}$
\item  $j_1' = \frac{-2^{3}}{7^{2}}$, $j_2' = \frac{-1}{2^{2} \cdot 7}$
, \quad $\denKWabs{} = (2^{3} \cdot 7^{2})^{\frac{1}{2}} \approx 7.31861142004594$
\item  $j_1 = \frac{-7^{2}}{2^{3}}$, $j_2 = \frac{7}{2^{5}}$
, \quad $\denabs{} = 2^{3} \approx 8.00000000000000$
\end{itemize}
 
 \begin{itemize}
\item  $N_{-2 \alpha^{2} + 3} = (2^{28} \cdot 7 \cdot 13)^{3} \approx (2.4\cdot 10^{10})^{3}$
\end{itemize}

 \vspace{2mm} 
 
\textbf{Example 2.}
For the field $K_+=\Q(\alpha)=\Q[x]/(x^{3} - x^{2} - 4 x - 1)$,
let $K = K_+(\zeta_3)$. Then there 
is exactly one curve with primitive CM by $\mathcal{O}_K$.
A conjectural model is given in \cite[6.1(3)]{KoikeWeng} and \cite[4.1.3]{LarioSomoza}.
Its invariants are
\begin{itemize}
\item  $i_1 = \frac{(7^{6} \cdot 13)^{2}}{(2 \cdot 5)^{6}}$, $i_2 = \frac{-7^{6} \cdot 13 \cdot 47^{2}}{2^{3} \cdot 5^{4}}$, $i_3 = \frac{-7^{8} \cdot 13^{2} \cdot 31}{(2^{2} \cdot 5)^{4}}$
\item  $\denDeltaabs{} = (2^{2} \cdot 5)^{6} \approx 6.4\cdot 10^{7}$
, \quad $\frac{1}{8}B^{10} \approx 2.6\cdot 10^{13}$
\item  $j_1' = \frac{-2^{3} \cdot 5^{2} \cdot 47^{2}}{7^{6} \cdot 13}$, $j_2' = \frac{-5^{2} \cdot 31}{(2 \cdot 7^{2})^{2}}$
, \quad $\denKWabs{} = (2^{3} \cdot 7^{6} \cdot 13)^{\frac{1}{2}} \approx 2.3\cdot 10^{2}$
\item  $j_1 = \frac{-7^{6} \cdot 13}{2^{3} \cdot 5^{2} \cdot 47^{2}}$, $j_2 = \frac{7^{2} \cdot 13 \cdot 31}{2^{5} \cdot 47^{2}}$
, \quad $\denabs{} = 2^{3} \cdot 5 \cdot 47 \approx 1.9\cdot 10^{3}$
\end{itemize}
 
 \begin{itemize}
\item  $N_{-2 \alpha^{2} + 2 \alpha + 5} = (2^{51} \cdot 5^{6} \cdot 13 \cdot 31 \cdot 47)^{3} \approx (6.7\cdot 10^{23})^{3}$
\end{itemize}

 \vspace{2mm} 
 
\textbf{Example 3.}
For the field $K_+=\Q(\alpha)=\Q[x]/(x^{3} + x^{2} - 10 x - 8)$,
let $K = K_+(\zeta_3)$. Then there 
is exactly one curve with primitive CM by $\mathcal{O}_K$.
A conjectural model is given in \cite[6.1(4)]{KoikeWeng} and \cite[4.1.4]{LarioSomoza}.
Its invariants are
\begin{itemize}
\item  $i_1 = \frac{(7^{3} \cdot 31 \cdot 73^{3})^{2}}{(2^{3} \cdot 23)^{6}}$, $i_2 = \frac{-2 \cdot 7^{3} \cdot 31 \cdot 47^{2} \cdot 73^{3}}{23^{6}}$, $i_3 = \frac{-7^{5} \cdot 31^{2} \cdot 73^{4} \cdot 11593}{(2^{10} \cdot 23^{3})^{2}}$
\item  $\denDeltaabs{} = (2^{4} \cdot 23)^{6} \approx 2.5\cdot 10^{15}$
, \quad $\frac{1}{8}B^{10} \approx 1.2\cdot 10^{17}$
\item  $j_1' = \frac{-2^{19} \cdot 47^{2}}{7^{3} \cdot 31 \cdot 73^{3}}$, $j_2' = \frac{-11593}{2^{2} \cdot 7 \cdot 73^{2}}$
, \quad $\denKWabs{} = (2^{3} \cdot 7^{3} \cdot 31 \cdot 73^{3})^{\frac{1}{2}} \approx 3.2\cdot 10^{3}$
\item  $j_1 = \frac{-7^{3} \cdot 31 \cdot 73^{3}}{2^{19} \cdot 47^{2}}$, $j_2 = \frac{7^{2} \cdot 31 \cdot 73 \cdot 11593}{2^{21} \cdot 47^{2}}$
, \quad $\denabs{} = 2^{11} \cdot 47 \approx 9.6\cdot 10^{4}$
\end{itemize}
 
 \begin{itemize}
\item  $N_{-\alpha^{2} + \alpha + 7} = (2^{205} \cdot 23^{2} \cdot 29^{2} \cdot 31 \cdot 47^{2} \cdot 61^{2} \cdot 89 \cdot 101 \cdot 139)^{3} \approx (7.3\cdot 10^{81})^{3}$
\end{itemize}

 \vspace{2mm} 
 
\textbf{Example 4.}
For the field $K_+=\Q(\alpha)=\Q[x]/(x^{3} - x^{2} - 14 x - 8)$,
let $K = K_+(\zeta_3)$. Then there 
is exactly one curve with primitive CM by $\mathcal{O}_K$.
A conjectural model is given in \cite[6.1(5)]{KoikeWeng} and \cite[4.1.5]{LarioSomoza}.
Its invariants are
\begin{itemize}
\item  $i_1 = \frac{(7^{3} \cdot 43^{2} \cdot 223^{3})^{2}}{(2^{4} \cdot 11 \cdot 47)^{6}}$, $i_2 = \frac{-7^{3} \cdot 41^{2} \cdot 43^{2} \cdot 59^{2} \cdot 223^{3}}{2^{13} \cdot 11^{4} \cdot 47^{6}}$, $i_3 = \frac{-7^{4} \cdot 43^{3} \cdot 223^{4} \cdot 419 \cdot 431}{(2^{13} \cdot 11^{2} \cdot 47^{3})^{2}}$
\item  $\denDeltaabs{} = (2^{5} \cdot 11 \cdot 47)^{6} \approx 2.1\cdot 10^{25}$
, \quad $\frac{1}{8}B^{10} \approx 3.1\cdot 10^{18}$
\item  $j_1' = \frac{-2^{11} \cdot 11^{2} \cdot 41^{2} \cdot 59^{2}}{7^{3} \cdot 43^{2} \cdot 223^{3}}$, $j_2' = \frac{-11^{2} \cdot 419 \cdot 431}{2^{2} \cdot 7^{2} \cdot 43 \cdot 223^{2}}$
, \quad $\denKWabs{} = (2^{3} \cdot 7^{3} \cdot 43^{2} \cdot 223^{3})^{\frac{1}{2}} \approx 3.8\cdot 10^{4}$
\item  $j_1 = \frac{-7^{3} \cdot 43^{2} \cdot 223^{3}}{2^{11} \cdot 11^{2} \cdot 41^{2} \cdot 59^{2}}$, $j_2 = \frac{7 \cdot 43 \cdot 223 \cdot 419 \cdot 431}{2^{13} \cdot 41^{2} \cdot 59^{2}}$
, \quad $\denabs{} = 2^{7} \cdot 11 \cdot 41 \cdot 59 \approx 3.4\cdot 10^{6}$
\end{itemize}
 
 \begin{itemize}
\item  $N_{-2 \alpha + 1} = (2^{288} \cdot 11^{9} \cdot 41^{3} \cdot 43 \cdot 47^{2} \cdot 59^{3} \cdot 97 \cdot 131 \cdot 173 \cdot 211 \cdot 223 \cdot 269)^{3} \approx (4.4\cdot 10^{124})^{3}$
\end{itemize}

 \vspace{2mm} 
 
\textbf{Example 5.}
For the field $K_+=\Q(\alpha)=\Q[x]/(x^{3} - 21 x - 28)$,
let $K = K_+(\zeta_3)$. Then there 
is exactly one curve with primitive CM by $\mathcal{O}_K$.
A conjectural model is given in \cite[4.2.1.1]{LarioSomoza}.
Its invariants are
\begin{itemize}
\item  $i_1 = \frac{-3^{9} \cdot 5^{12} \cdot 7^{4}}{2^{18}}$, $i_2 = \frac{3^{3} \cdot 5^{6} \cdot 7^{2} \cdot 71^{2}}{2^{3}}$, $i_3 = \frac{3^{7} \cdot 5^{9} \cdot 7^{3} \cdot 2621}{2^{20}}$
\item  $\denDeltaabs{} = (2^{8} \cdot 3)^{3} \approx 4.5\cdot 10^{8}$
, \quad $\frac{1}{8}B^{10} \approx 2.1\cdot 10^{15}$
\item  $j_1' = \frac{-2^{15} \cdot 71^{2}}{(3^{3} \cdot 5^{3} \cdot 7)^{2}}$, $j_2' = \frac{-2621}{2^{2} \cdot 3^{2} \cdot 5^{3} \cdot 7}$
, \quad $\denKWabs{} = (2^{3} \cdot 3^{6} \cdot 5^{6} \cdot 7^{2})^{\frac{1}{2}} \approx 1.6\cdot 10^{3}$
\item  $j_1 = \frac{-(3^{3} \cdot 5^{3} \cdot 7)^{2}}{2^{15} \cdot 71^{2}}$, $j_2 = \frac{3^{4} \cdot 5^{3} \cdot 7 \cdot 2621}{2^{17} \cdot 71^{2}}$
, \quad $\denabs{} = 2^{9} \cdot 71 \approx 3.6\cdot 10^{4}$
\end{itemize}
 
 \begin{itemize}
\item  $N_{2 \alpha} = 2^{433} \cdot 3^{55} \cdot 7^{11} \cdot 31^{3} \cdot 47^{3} \cdot 59^{3} \cdot 61^{3} \cdot 71^{3} \cdot 173^{3} \approx (1.3\cdot 10^{66})^{3}$
\end{itemize}

 \vspace{2mm} 
 
\textbf{Example 6.}
For the field $K_+=\Q(\alpha)=\Q[x]/(x^{3} - 21 x - 35)$,
let $K = K_+(\zeta_3)$. Then there 
is exactly one curve with primitive CM by $\mathcal{O}_K$.
A conjectural model is given in \cite[4.2.1.2]{LarioSomoza}.
Its invariants are
\begin{itemize}
\item  $i_1 = \frac{-2^{12} \cdot 3^{9} \cdot 7^{4} \cdot 37^{6}}{(5 \cdot 11 \cdot 23)^{6}}$, $i_2 = \frac{2^{6} \cdot 3^{3} \cdot 7^{2} \cdot 37^{3} \cdot 149^{2} \cdot 257^{2}}{(5^{2} \cdot 11^{3} \cdot 23^{3})^{2}}$, $i_3 = \frac{2^{9} \cdot 3^{7} \cdot 7^{3} \cdot 37^{4} \cdot 2683}{(5^{2} \cdot 11^{3} \cdot 23^{3})^{2}}$
\item  $\denDeltaabs{} = (3 \cdot 5^{2} \cdot 11^{2} \cdot 23^{2})^{3} \approx 1.1\cdot 10^{20}$
, \quad $\frac{1}{8}B^{10} \approx 2.1\cdot 10^{15}$
\item  $j_1' = \frac{-(5 \cdot 149 \cdot 257)^{2}}{2^{6} \cdot 3^{6} \cdot 7^{2} \cdot 37^{3}}$, $j_2' = \frac{-5^{2} \cdot 2683}{2^{3} \cdot 3^{2} \cdot 7 \cdot 37^{2}}$
, \quad $\denKWabs{} = (2^{6} \cdot 3^{6} \cdot 7^{2} \cdot 37^{3})^{\frac{1}{2}} \approx 4.9\cdot 10^{3}$
\item  $j_1 = \frac{-2^{6} \cdot 3^{6} \cdot 7^{2} \cdot 37^{3}}{(5 \cdot 149 \cdot 257)^{2}}$, $j_2 = \frac{2^{3} \cdot 3^{4} \cdot 7 \cdot 37 \cdot 2683}{(149 \cdot 257)^{2}}$
, \quad $\denabs{} = 5 \cdot 149 \cdot 257 \approx 1.9\cdot 10^{5}$
\end{itemize}
 
 \begin{itemize}
\item  $N_{-2 \alpha^{2} + 4 \alpha + 28} = 2^{245} \cdot 3^{58} \cdot 5^{36} \cdot 7^{8} \cdot 11^{12} \cdot 23^{6} \cdot 71^{3} \cdot 149^{3} \cdot 257^{3} \approx (5.9\cdot 10^{57})^{3}$
\end{itemize}

 \vspace{2mm} 
 
\textbf{Example 7.}
For the field $K_+=\Q(\alpha)=\Q[x]/(x^{3} - 39 x - 26)$,
let $K = K_+(\zeta_3)$. Then there 
is exactly one curve with primitive CM by $\mathcal{O}_K$.
A conjectural model is given in \cite[4.2.1.3]{LarioSomoza}.
Its invariants are
\begin{itemize}
\item  $i_1 = \frac{-3^{9} \cdot 5^{12} \cdot 7^{6} \cdot 11^{6} \cdot 13^{2}}{(2^{5} \cdot 29)^{6}}$, $i_2 = \frac{3^{3} \cdot 5^{6} \cdot 7^{3} \cdot 11^{5} \cdot 13 \cdot 59^{2} \cdot 149^{2}}{2^{19} \cdot 29^{6}}$, $i_3 = \frac{3^{7} \cdot 5^{9} \cdot 7^{5} \cdot 11^{4} \cdot 13^{2} \cdot 17 \cdot 17669}{(2^{16} \cdot 29^{3})^{2}}$
\item  $\denDeltaabs{} = (2^{12} \cdot 3 \cdot 29^{2})^{3} \approx 1.1\cdot 10^{21}$
, \quad $\frac{1}{8}B^{10} \approx 1.\cdot 10^{18}$
\item  $j_1' = \frac{-2^{11} \cdot 59^{2} \cdot 149^{2}}{3^{6} \cdot 5^{6} \cdot 7^{3} \cdot 11 \cdot 13}$, $j_2' = \frac{-17 \cdot 17669}{2^{2} \cdot 3^{2} \cdot 5^{3} \cdot 7 \cdot 11^{2}}$
, \quad $\denKWabs{} = (2^{3} \cdot 3^{6} \cdot 5^{6} \cdot 7^{3} \cdot 11^{3} \cdot 13)^{\frac{1}{2}} \approx 8.1\cdot 10^{4}$
\item  $j_1 = \frac{-3^{6} \cdot 5^{6} \cdot 7^{3} \cdot 11 \cdot 13}{2^{11} \cdot 59^{2} \cdot 149^{2}}$, $j_2 = \frac{3^{4} \cdot 5^{3} \cdot 7^{2} \cdot 13 \cdot 17 \cdot 17669}{2^{13} \cdot 11 \cdot 59^{2} \cdot 149^{2}}$
, \quad $\denabs{} = 2^{7} \cdot 11 \cdot 59 \cdot 149 \approx 1.2\cdot 10^{7}$
\end{itemize}
 
 \begin{itemize}
\item  $N_{-\frac{1}{2} \alpha^{2} + \frac{3}{2} \alpha + 13} = 2^{921} \cdot 3^{100} \cdot 11^{21} \cdot 13^{8} \cdot 29^{6} \cdot 53^{3} \cdot 59^{6} \cdot 109^{3} \cdot 113^{3} \cdot 149^{6} \cdot 233^{3} \cdot 359^{3} \cdot 467^{3} \cdot 541^{3} \cdot 577^{3} \approx (2.\cdot 10^{148})^{3}$
\end{itemize}

 \vspace{2mm} 
 
\textbf{Example 8.}
For the field $K_+=\Q(\alpha)=\Q[x]/(x^{3} - 61 x - 183)$,
let $K = K_+(\zeta_3)$. Then there 
are exactly four curves with primitive CM by $\mathcal{O}_K$.
Conjectural models are given in \cite[4.3.1 (corrected w.r.t.~arXiv version 1)]{LarioSomoza}.
Let $H_{j_1}=H_{\mathcal{O}_K,1}$ and $\widehat{H}_{j_1,j_2}=\widehat{H}_{\mathcal{O}_K, 2}$ be 
the polynomials as in \eqref{eq:classpol1}--\eqref{eq:classpol2}, 
and let ${H}_{j_1'}$ and $H_{i_1}$ be defined as in \eqref{eq:classpol1}, 
but with the invariants $j_1'$ and $i_1$ instead of~$j_1$.
 These polynomials are numerically approximable with the methods of Koike-Weng~\cite{KoikeWeng} and Lario-Somoza~\cite{LarioSomoza} 
 and satisfy $H_{j_1}(j_1(C))=0$ and $j_2(C) = \widehat{H}_{j_1,j_2}(j_1(C)) / H_{j_1}'(j_1(C))$ 
(see~\cite{GHKRW}). 
Then the denominators of these polynomials are\begin{itemize}
\item $\mathrm{den}(H_{j_1}) = 2^{3} \cdot 3^{39} \cdot 11^{2} \cdot 23^{2} \cdot 41^{2} \cdot 53^{4} \cdot 89^{2} \cdot 113^{2} \cdot 149^{2} \cdot 191^{2}\approx 2.3\cdot 10^{51}$
\\ $\mathrm{den}(\widehat{H}_{j_1,j_2}) = 2^{5} \cdot 3^{39} \cdot 11^{3} \cdot 23^{2} \cdot 41^{2} \cdot 53^{4} \cdot 89^{2} \cdot 113^{2} \cdot 149^{2} \cdot 191^{2}\approx 9.9\cdot 10^{52}$
\item $\mathrm{den}(H_{j_1'}) = 2^{18} \cdot 7^{12} \cdot 11 \cdot 61^{2} \cdot 1289^{3} \cdot 6551^{3} \cdot 20707^{3}\approx 7.9\cdot 10^{53}$
\item $\mathrm{den}(H_{i_1}) = (2^{2} \cdot 3^{9} \cdot 11^{6} \cdot 23^{4} \cdot 53^{2} \cdot 131^{2})^{3}\approx 6.7\cdot 10^{72}$
\end{itemize}
 
 \begin{itemize}
\item  $N_{-4 \alpha^{2} + 18 \alpha + 163} = (2^{235} \cdot 3^{148} \cdot 11^{12} \cdot 23^{6} \cdot 37^{3} \cdot 41^{3} \cdot 53^{3} \cdot 61 \cdot 89 \cdot 113 \cdot 131 \cdot 149 \cdot 191 \cdot 367 \cdot 613 \cdot 643 \cdot 733 \cdot 907)^{3} \approx (1.2\cdot 10^{203})^{3}$
\end{itemize}

 \vspace{2mm} 
 
\textbf{Example 9.}
For the field $K_+=\Q(\alpha)=\Q[x]/(x^{3} - x^{2} - 22 x - 5)$,
let $K = K_+(\zeta_3)$. Then there 
are exactly four curves with primitive CM by $\mathcal{O}_K$.
Conjectural models are given in \cite[4.3.2]{LarioSomoza}.
In the notation of Example~8, we have \begin{itemize}
\item $\mathrm{den}(H_{j_1}) = (2^{6} \cdot 3^{15} \cdot 5^{4} \cdot 89 \cdot 137 \cdot 149 \cdot 179 \cdot 269)^{2}\approx 2.5\cdot 10^{45}$
\\ $\mathrm{den}(\widehat{H}_{j_1,j_2}) = (2^{6} \cdot 3^{15} \cdot 5^{3} \cdot 89 \cdot 137 \cdot 149 \cdot 179 \cdot 269)^{2}\approx 1.\cdot 10^{44}$
\item $\mathrm{den}(H_{j_1'}) = 7^{12} \cdot 53^{3} \cdot 67 \cdot 107^{3} \cdot 179^{3} \cdot 3029017^{3}\approx 2.7\cdot 10^{49}$
\item $\mathrm{den}(H_{i_1}) = (2^{4} \cdot 3^{5} \cdot 5^{4} \cdot 53 \cdot 59 \cdot 107)^{6}\approx 2.9\cdot 10^{71}$
\end{itemize}
 
 \begin{itemize}
\item  $N_{-\frac{2}{3} \alpha^{2} + \frac{29}{3}} = (2^{253} \cdot 3^{155} \cdot 5^{32} \cdot 43^{2} \cdot 53^{3} \cdot 59^{2} \cdot 67 \cdot 89^{2} \cdot 107 \cdot 109 \cdot 137 \cdot 149 \cdot 179^{2} \cdot 223 \cdot 241 \cdot 263 \cdot 269 \cdot 397 \cdot 643 \cdot 997 \cdot 1087)^{3} \approx (1.2\cdot 10^{224})^{3}$
\end{itemize}

 \vspace{2mm}

\vspace{2mm}    

\begin{remark}
	Notice that the size of the denominators of the absolute invariants $j_1$ and $j_2$
	of Section~\ref{sec:invariants}
	is similar to the denominators of the Koike-Weng invariants and much 
	smaller that the denominators of the invariants defined by using the discriminant.
	Theorem $1.3$ in \cite{KLLNOS} suggests a bound for the primes appearing in the discriminant,
	while we do not have a bound at all for the primes in the denominator of the Koike-Weng invariants.
	That the primes in the denominators of the Koike-Weng invariants are small,
	and even smaller than
	those for our invariants, is a mystery that needs further research.
	 
	For our absolute invariants, we have the best bound. 
	Hence among the three kind of invariants the more suitable ones for constructing Picard 
	curves with CM by a given order $\mathcal{O}$ are the absolute invariants $j_1$ and~$j_2$.
\end{remark}
	
\appendix

	\section{A lemma about components of bad reduction}
	\label{appendix}
	
	Most of this appendix is an edited copy of an email from Bas Edixhoven
	to the authors. Lemma~\ref{lem:components_of_bad_red}
	below and its proof are well-known to many experts,
	but it seems that neither
	 is written down in the literature.
	For completeness, as we use it in our proof of the singular case of Proposition~\ref{prop:decomp},
	we state the lemma
	and provide details of the proof.
	
	\begin{lemma} \label{lem:components_of_bad_red}
		
		Let $R$ be a discrete valuation ring with fraction field $M$
		and residue field~$k$.
		Let $X$ be a
		projective $R$-scheme, of dimension $2$, flat over $R$ 
		such that $X_M$ is smooth and geometrically connected over~$M$, 
		of genus at least~$1$. Let $C$ be an
		irreducible component of $X_k$, and assume that $C$ is geometrically irreducible
		and birational to a smooth geometrically irreducible
		projective curve $C'$ over $k$ of genus at least~$1$.
		
		Suppose that there exists an open subscheme $U$ of $X$ 
		such that $U$ is smooth over $R$, and such that $U_k$ is a non-empty 
		open subset of~$C$.
		
		Let $M \rightarrow M'$ be a finite separable field 
		extension such that $X_M$ 
		has stable reduction over the integral closure $R^{'}$ of 
		$R$ in $M^{'}$.
		Then the open subscheme $U_{R'}$ of $X_{R'}$ is isomorphic 
		to an open subscheme of the stable model of~$X$.
		Moreover, 
		the normalization of the special fibre of 
		the stable model of $X$ contains a copy of~$C^{'}_{k'}$,
		where $k'$ is the residue field of~$R'$.
	\end{lemma}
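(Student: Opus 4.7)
The plan is to construct an open immersion $U_{R'} \hookrightarrow Z$, where $Z$ denotes the stable model of $X_{M'}$ over $R'$. The heart of the argument is a valuative analysis at the generic point of $C_{k'}$, which will simultaneously identify a component of $Z_{k'}$ whose normalization is $C'_{k'}$ (the second claim of the lemma) and supply the key input for the open immersion (the first claim).

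Let $\eta$ denote the generic point of $C_{k'}$; it lies in $U_{R'}$ because $U_k$ is dense in $C$, and the local ring $V := \mathcal{O}_{U_{R'}, \eta}$ is a discrete valuation ring with fraction field the function field $K$ of $X_{M'}$. By the valuative criterion of properness applied to $V$ and the proper $R'$-scheme $Z$, there is a unique center $\zeta \in Z$ such that $V$ dominates $\mathcal{O}_{Z, \zeta}$. The crux is to show that $\zeta$ is the generic point of an irreducible component $C^Z$ of $Z_{k'}$, by ruling out the case in which $\zeta$ is a closed point. The total space of the stable model has only $A_n$-type singularities (locally $xy = \pi^n$ at nodes), so any resolution of singularities has exceptional fibers consisting of chains of $\mathbb{P}^1$'s, and iterated blow-ups of closed points on regular surfaces contribute only further $\mathbb{P}^1$'s. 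Consequently any divisorial valuation of $K$ centered above a closed point of $Z$ has residue field of the form $k_0(t)$ for some finite extension $k_0$ of $\kappa(\zeta)$. If $\zeta$ were closed, then $V$ would be such a valuation, and its residue field $\kappa(\eta) = k'(C_{k'}) = k'(C'_{k'})$ would equal $k_0(t)$; comparing constant subfields (on the left this is $k'$, since $C'_{k'}$ is geometrically irreducible over $k'$) forces $k_0 = k'$, and then $k'(t) \cong k'(C'_{k'})$ would imply $C'_{k'} \cong \mathbb{P}^1_{k'}$, contradicting $g(C') \geq 1$. Hence $\zeta$ is the generic point of some component $C^Z$ of $Z_{k'}$, and since $\mathcal{O}_{Z, \zeta} \subseteq V$ are two DVRs in $K$, they coincide. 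In particular $\kappa(\zeta) = k'(C'_{k'})$, so $C'_{k'}$ is the normalization of $C^Z$, yielding the second claim.

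For the first claim, apply Lipman's resolution of singularities for $2$-dimensional excellent schemes to obtain a regular proper $R'$-scheme $Y$ with a birational morphism $p: Y \to X_{R'}$ that is an isomorphism over the regular locus of $X_{R'}$, and in particular over $U_{R'}$. The rational map $Y \dashrightarrow Z$ has indeterminacy supported in a finite set of closed points, which can be resolved by further blow-ups, producing a morphism $q: Y \to Z$. A variant of the argument of the previous paragraph shows these further blow-ups can be arranged not to touch $p^{-1}(U_{R'}) \cong U_{R'}$: an indeterminacy point in $U_{R'}$ would produce, after resolution, a rational exceptional curve in $Y$ sharing its generic point with a DVR whose center on $Z$ must (by the uniqueness in the valuative criterion) sit inside $C^Z$, forcing a positive-genus component to be birational to a rational curve. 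Thus the composition $q \circ p^{-1}: U_{R'} \to Z$ is a well-defined birational morphism. It is quasi-finite, since the only integral curve in $U_{R'}$ is $U_{k'} \subset C_{k'}$, whose generic point $\eta$ maps to $\zeta$, a generic point of $Z_{k'}$, so nothing is contracted. By Zariski's main theorem, a birational quasi-finite morphism to a normal scheme is an open immersion; therefore $U_{R'} \hookrightarrow Z$ is an open immersion.

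The main obstacle is the case analysis ruling out $\zeta$ closed. This rests on two ingredients: the explicit classification of the total-space singularities of the stable model as $A_n$-type (so that resolutions produce only rational exceptional curves), and a careful constant-field comparison to handle the possible residue-field extension $\kappa(\zeta)/k'$. Everything else follows from standard machinery---Lipman's resolution, the valuative criterion of properness, and Zariski's main theorem.
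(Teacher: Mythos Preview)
Your valuative argument for the \emph{second} claim is correct and elegant: the center of the DVR $V=\mathcal{O}_{U_{R'},\eta}$ on $Z$ cannot be a closed point, since every divisorial valuation centered over a closed point of $Z$ arises from a rational exceptional curve (only $A_n$ singularities to resolve, then smooth-point blow-ups), and the constant-field comparison you give rules out $\kappa(\eta)\cong k_0(t)$. This pins down a component $C^Z$ with function field $k'(C'_{k'})$, hence with normalization $C'_{k'}$.

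The gap is in the \emph{first} claim. Your sentence ``an indeterminacy point in $U_{R'}$ would produce, after resolution, a rational exceptional curve \dots\ whose center on $Z$ must sit inside $C^Z$'' is not justified, and in fact need not hold. If $y\in U_{R'}$ were an indeterminacy point and $P\cong\mathbb{P}^1$ an exceptional curve over $y$ in $\tilde Y$, the image of $P$ under $\tilde Y\to Z$ could perfectly well be a \emph{different} component $D$ of $Z_{k'}$; stable curves may have rational components (those meeting the rest in at least three points), so this is no contradiction. What you can say is that the total exceptional fibre over $y$ is not contracted to a point (otherwise $Y\dashrightarrow Z$ would already be defined at $y$, since $Z$ is normal), but that only tells you \emph{some} component of $Z_{k'}$ is dominated by a $\mathbb{P}^1$, not that $C^Z$ is. So the argument as written does not rule out indeterminacy inside $U_{R'}$, and the Zariski's-Main-Theorem step never gets off the ground.

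The paper's proof avoids this difficulty by routing through the minimal regular model rather than mapping directly to $Z$. One passes from a desingularization $X_{R'}^{\mathrm{res}}$ of $X_{R'}$ (an isomorphism over $U_{R'}$) down to the minimal regular model $X_{R'}^{\mathrm{min}}$; the contractions here are Castelnuovo $(-1)$-curves, so the strict transform of $C_{k'}$ (genus $\geq 1$) is untouched and $U_{R'}$ embeds in $X_{R'}^{\mathrm{min}}$. Then one uses the standard identification $X_{R'}^{\mathrm{min}}\cong(X_{R'}^{\mathrm{stab}})^{\mathrm{mindes}}$ and the fact that the map from the minimal desingularization to the stable model contracts only $(-2)$-curves, again sparing the genus-$\geq 1$ component. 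The point is that by factoring through the minimal model one knows \emph{exactly} which curves get contracted at each stage, whereas your direct rational map $Y\dashrightarrow Z$ gives you no such control. Your strategy could likely be repaired by inserting this intermediate step, but then it would essentially become the paper's proof.
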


	\begin{proof}
		Let $X_{R'}$ be the pullback of $X$ via 
		$R \rightarrow R'$, which is integral by Proposition 4.3.8 of Liu~\cite{Liu},
		and let
		$X_{R'}^\text{stab}$ be the (unique) stable model of $X_{M'}$ over $R'$.
		
		We apply Corollary 8.3.51 of Liu~\cite{Liu} to $X_{R'}$
		(and see \cite[Definition 8.3.39]{Liu} to see what `in the strong sense' means).
		This gives us $f: X_{R'}^\text{res} \rightarrow X_{R'}^{\phantom{\text{r}}}$ birational, with 
		$X_{R'}^\text{res}$ projective over~$R'$
		and regular, with 
		$f$ an isomorphism over the open 
		subscheme $U_{R'}$ of $X_{R'}$. Here we use that $U$ is smooth over $R$, 
		hence $U_{R'}$ is smooth over $R'$, hence $U_{R'}$ is regular.
		
		Theorem 9.3.21 in \cite{Liu} says that there is a unique minimal regular
		model $X_{R'}^\text{res}\rightarrow X_{R'}^\text{min}$ of $X_{R'}^\text{res}$,
		which is in fact
		(see the proof)
		isomorphic to every relatively minimal model.
		This morphism is the identity on the generic fibres and
		by the construction
		(Castelnuovo's criterion (Theorem 9.3.8) and Proposition 9.3.19 in \cite{Liu})
		contracts precisely the irreducible components $E$ of the closed fibre 
		of
		$X_{R'}^\text{res}$ such that $E$ is isomorphic to $\PP^1_{k_E}$ with $k_E :=
		H^0(E,O_E)$ (a finite extension of the residue field $k'$ of $R'$ over
		which $E$ lies), and that $E\cdot E = -[k_E:k']$.
		
		Note that the open subscheme $U_{R'}$ of $X_{R'}^\text{res}$ is mapped
		isomorphically to an open subscheme of $X_{R'}^\text{min}$, because 
		its closed
		fibre is an open part of a curve of genus $\geq 1$.
		
		Corollary 10.3.25 of \cite{Liu} says that there is a 
		unique morphism to $X_{R'}^\text{stab}$ from its minimal desingularization
		$(X_{R'}^\text{stab})^{\mathrm{mindes}}$
		and that this morphism only contracts $\PP^1$'s in the closed
		fibres of self-intersection~$-2$.
		As the geometric special fibre of
		$X_{R'}^\text{stab}$ has no $\PP^1$'s, except with self-intersection $\leq -3$
		(Definitions 10.3.1--2 of \cite{Liu}),
		we get that the geometric special fibre of
		$(X_{R'}^\text{stab})^{\mathrm{mindes}}$ has
		no $\PP^1$'s except with self-intersection $\leq -2$.

		Exactly like $X_{R'}^\text{res}$, the surface $(X_{R'}^\text{stab})^{\text{mindes}}$
		also has a morphism to $X_{R'}^\text{min}$ that only contracts
		curves that are (after field extension)
		$\PP^1$'s of self-intersection $-1$,
		and as $(X_{R'}^\text{stab})^{\text{mindes}}$ has no such $\PP^1$'s,
		this morphism is an isomorphism.
	
\begin{figure}
	\[
\xymatrix{& (X_{R'}^\mathrm{stab})^{\rlap{{\scriptsize mindes}}} 
	\ar@{=}[dr]
	\ar[dl]& & X_{R'}^{\mathrm{res}} \ar[dr]\ar[dl] \\
X_{R'}^{\mathrm{stab}} & &  X_{R'}^{\mathrm{min}} & & X_{R'}}
\]
\caption{The fibred surfaces in the proof of Lemma~\ref{lem:components_of_bad_red}.}
\label{fig:zigzag}
\end{figure}
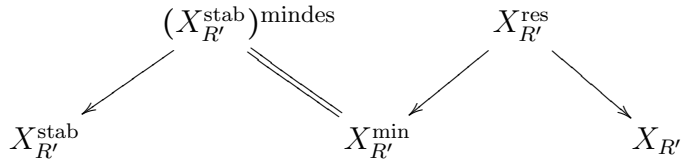

		Therefore, through the maps
		of Figure~\ref{fig:zigzag}, the open subscheme
		$U_{R'}\subset X_{R'}$ is mapped isomorphically to an open subscheme of
		$X_{R'}^\text{stab}$.
		
		When we base change $U_{R'}$ to the residue field $k'$,
		we get an embedding 
		$U_{k'}\rightarrow X_{k'}^{\mathrm{stab}}$, 
		where~$U_{k'}$ is a base change of a non-empty open smooth
		part of~$C$, hence a base change of a dense part of~$C'$.
		Let $C^{''}$ be the closure of the image of this embedding
		of curves. Then we get a birational map of
		curves $C^{'}_{k'}\rightarrow C^{''}$,
		hence an isomorphism to the normalization.
	\end{proof}
	
	\begin{cor}\label{cor:components_of_bad_red}
	    Let $R$ be a discrete valuation ring with maximal ideal $\mathfrak{m}$,
	    field of fractions~$M$, and residue field $k=R/\mathfrak{m}$
	    of characteristic not $2$ or~$3$.
	    Let $D$ be a smooth projective, geometrically irreducible
	    curve over $M$ and let suppose that $R$ is such that $D$ has
	    stable reduction over~$R$.
	    \begin{enumerate}
	    	\item If $D$ over $M$ is given by
	    	$y^3 = x^4 + ax^2 + bx + 1$
	    	with $b, a \pm 2\in \mathfrak{m}$,
	    	then the stable reduction 
	    	$\overline{D}$ of $D$ has an irreducible component birational to the 
	    	elliptic curve $C' : Y^2 = X^3 \pm 1$,
	    	\item If $D$ over $M$ is given by $y^3 = x^4 + x^2 + bx + c$
	    	with $b, c\in \mathfrak{m}$, then the stable reduction 
			$\overline{D}$ of $D$ has an irreducible component birational to the 
			hyperelliptic curve $C' : Y^2 = X^6-4$.
	    \end{enumerate}
	\end{cor}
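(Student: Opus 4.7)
The plan is to apply Lemma~\ref{lem:components_of_bad_red} to the projective closure of the given affine equation for $D$. First I would let $X$ be the closed subscheme of $\PP^2_R$ defined by the homogenization $F = Y^3 Z - X^4 - aX^2 Z^2 - bXZ^3 - cZ^4$ of the equation (with $c=1$ in \case{1}). Then $X$ is flat and projective over $R$ of absolute dimension $2$, and its generic fiber is canonically isomorphic to $D$, which is smooth and geometrically connected of genus $3\geq 1$, as required.

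Next I would analyze the special fiber $X_k$, cut out by the reduction $\overline{F}$: in \case{1} this is $Y^3Z = (X^2 \pm Z^2)^2$, and in \case{2} it is $Y^3Z = X^2(X^2 + Z^2)$. Because neither $(x^2 \pm 1)^2$ nor $x^2(x^2 + 1)$ is a cube in $\kbar(x)^\times$, the affine reductions $y^3 - (x^2\pm 1)^2$ and $y^3 - x^2(x^2+1)$ are irreducible in $\kbar(x)[y]$, so $X_k$ is geometrically irreducible and thus coincides with its sole irreducible component $C$. The birational substitutions $v = (x^2 \pm 1)/y$ (in \case{1}) and $Y = y/x$ together with $X = 2x - Y^3$ (in \case{2}) from the proof of Lemma~\ref{lem:cases} identify the normalization of $C$ with the curve $C'$ claimed in the corollary; $C'$ has genus $1$ or $2$, meeting the genus-at-least-one hypothesis.

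Then I would take $U$ to be the complement in $X$ of the (finite) singular locus of $X_k$. Using the Jacobian criterion, this singular locus consists of the two geometric points $[X:0:1]$ with $X^2 = \mp 1$ in \case{1} and of the single point $[0:0:1]$ in \case{2}; at the point at infinity $[0:1:0]$ one has $\partial F/\partial Z \equiv 1$, so $X$ is already smooth over $R$ there. At each point of $U$ some partial of $F$ is a unit in the local ring, so $U$ is smooth over $R$ by the Jacobian criterion, and $U_k$ is a non-empty Zariski-open subscheme of $C$. All hypotheses of Lemma~\ref{lem:components_of_bad_red} are now met, yielding a copy of $C'_{k'}$ inside the normalization of the special fiber of the stable model of $D$ over $R'$; since the irreducible components of a stable curve are themselves smooth, this copy is one of those components, which is the desired conclusion.

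The one step requiring genuine care is showing that $X_k$ is geometrically irreducible and that its normalization is globally isomorphic to $C'$ (rather than only birational on one affine chart); this essentially repackages the computation from the proof of Lemma~\ref{lem:cases}, together with the local check at $[0:1:0]$ needed to rule out any hidden component or singularity at infinity.
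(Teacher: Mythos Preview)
Your proof is correct and follows the same strategy as the paper: take $X$ to be the plane projective model over $R$, exhibit a smooth open $U$ meeting $C$, and apply Lemma~\ref{lem:components_of_bad_red}. The paper is terser and chooses $U=\Spec(R[x,y,y^{-1}]/(f))$, the affine locus where $y\neq 0$, whereas you remove exactly the singular points of $X_k$; both work for the same reason, since every singular point of $X_k$ lies on $y=0$. One small slip in your last paragraph: irreducible components of a stable curve need not be smooth (they may carry self-nodes), so the normalization is not literally the disjoint union of the components. This does not affect the conclusion, however, because the normalization of a nodal curve \emph{is} the disjoint union of the normalizations of its components, so a connected smooth $C'$ inside the normalization is the normalization of some component, hence birational to it --- which is all the corollary claims.
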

\begin{proof}
	Let $X$ (respectively $C$) be the plane projective
	$R$-scheme (respectively $k$-scheme)
	given by the defining polynomial $f$ of~$D$.
	Let $U=\mathrm{Spec}(R[x,y,y^{-1}]/(f))$.
	%
	%
	By Lemma~\ref{lem:components_of_bad_red}, it now
	suffices to check that $C$ is birational to $C'$,
	which we did in the proof of Lemma~\ref{lem:cases}.
\end{proof}

\bibliographystyle{plain}
\bibliography{biblio}
\Addresses

\end{document}